\newtheorem{theorem}{Theorem}[section] 
\newtheorem{definition}{Definition}[section]
\newtheorem{proposition}[theorem]{Proposition}
\newtheorem{corollary}[theorem]{Corollary}
\newtheorem{remark}[theorem]{Remark}
\journal{}
\begin{document}

\begin{frontmatter}
\title{A fractional Laplacian problem with mixed singular nonlinearities and nonregular data}
\author[mymainaddress]{Masoud Bayrami-Aminlouee\corref{mycorrespondingauthor}}
\cortext[mycorrespondingauthor]{Corresponding author}
\ead{masoud.bayrami1990@student.sharif.edu}
\author[mymainaddress]{Mahmoud Hesaaraki}
\ead{hesaraki@sharif.edu}
\address[mymainaddress]{Department of Mathematical Sciences, Sharif University of Technology, Azadi Ave., P.O. Box 11365-9415, Tehran, Iran}

\begin{abstract}
In this note, we study on the existence and uniqueness of a positive solution to the following doubly singular fractional problem:
$$
\begin{cases}
(-\Delta )^s u = \dfrac{K(x)}{u^q} + \dfrac{f(x)}{u^{\gamma}}+\mu & \mathrm{in} \,\, \Omega,\\ u>0 & \mathrm{in} \,\, \Omega, \\ u=0 & \mathrm{in} \,\, \big(\mathbb{R}^N \setminus \Omega \big).
\end{cases}
$$
Here $\Omega \subset \mathbb{R}^N$ ($N > 2s$) is an open bounded domain with smooth boundary, $ s \in (0,1)$, $q>0$, $\gamma>0$, and $K(x)$ is a positive H\"older continuous function in which behaves as $\mathrm{dist}(x, \partial \Omega)^{-\beta}$ near the boundary with $0 \leq \beta <2s$. Also, $0 \leq f, \mu \in L^1(\Omega)$, or non-negative bounded Radon measures in $\Omega$. Moreover, we assume that $0<\frac{\beta}{s}+q < 1$, or $\frac{\beta}{s}+q > 1$ with $2\beta+q(2s-1)<(2s+1)$. For $s \in (0,\frac{1}{2})$, we take advantage of the convexity of $\Omega$. For any $\gamma>0$, we will prove the existence of a positive weak (distributional) solution to the above problem. Besides, for the case $ 0 < \gamma \leq 1$, $\mu \in L^1(\Omega)+X^{-s}(\Omega)$, and some weighted integrable functions $f$, we will show the existence and uniqueness of another notion of a solution, so-called entropy solution. Also, we will discuss the uniqueness of the weak solution for the case $\gamma>1$, and also the equivalence of entropy and weak solutions for the case $0< \gamma \leq 1$. Finally, we will have some relaxation on the assumptions of $f$ in order to prove the existence of solutions.
\end{abstract}

\begin{keyword}
Fractional Laplacian equation \sep Mixed singular nonlinearities \sep Fractional Hardy-Sobolev inequality \sep Nonregular data \sep Positive entropy solution \sep Uniqueness  
\MSC[2010] 35R11 \sep 35J75 \sep 35B09 \sep 35A01
\end{keyword}

\end{frontmatter}

\section{Introduction}
This note is concerned with the existence of a unique positive solution to the following doubly singular elliptic problem: 
\begin{equation}
\label{Eq1}
\begin{cases}
(-\Delta )^s u = \dfrac{K(x)}{u^q}+\dfrac{f(x)}{u^{\gamma}}+\mu & \mathrm{in} \,\, \Omega,\\ u>0 & \mathrm{in} \,\, \Omega, \\ u=0 & \mathrm{in} \,\,  \big(\mathbb{R}^N \setminus \Omega \big).
\end{cases}
\end{equation}
Here $\Omega \subset \mathbb{R}^N$ ($N > 2s$) is an open bounded domain with smooth boundary, $ s \in (0,1)$, $q>0$, and $\gamma>0$. Also, we assume that $K \in C_{\mathrm{loc}}^{\theta}(\Omega)$, $\theta \in (0,1)$, satisfies the following condition:
\begin{equation}
\label{Con1}
C_1 \delta^{-\beta}(x) \leq K(x) \leq C_2 \delta^{-\beta}(x), \qquad \forall x \in \Omega,
\end{equation}
for some $0 \leq \beta<2s$ and $C_1,C_2>0$. Here $\delta(x):=\mathrm{dist}(x, \partial \Omega)$, $x \in \Omega$, is the distance function from the boundary $\partial \Omega$. Also, $0 \leq f, \mu \in L^1(\Omega)$, or non-negative bounded Radon measures in $\Omega$. Moreover, we assume that $0<\frac{\beta}{s}+q < 1$, or $\frac{\beta}{s}+q > 1$ with $2\beta+q(2s-1)<(2s+1)$. For $s \in (0,\frac{1}{2})$, we take advantage of the convexity of $\Omega$. The operator $(-\Delta)^{s} $ stands for the fractional Laplacian which is given by a singular integral operator in the following way:
$$
(-\Delta )^{s} u(x) = C_{N,s} \, \mathrm{P.V.} \int_{\mathbb{R}^N} \frac{u(x)-u(y)}{|x-y|^{N+2s}} \, dy
$$
where $\mathrm{P.V.}$ denotes the Cauchy principal value and $C_{N,s}= \frac{4^s \Gamma(\frac{N}{2}+s)}{\pi^{\frac{N}{2}} |\Gamma(-s)|}$, is the normalization constant such that the identity, $(-\Delta )^s u = \mathcal{F}^{-1} \big(|\xi|^{2s} \hat{u}(\xi) \big)$ holds. Here $\Gamma$ is the Gamma function, and $\mathcal{F}u =\hat{u}$ denotes the Fourier transform of $u$. For more details about fractional Laplacian and also for the basic properties of the fractional Laplace operator, see the papers \cite{MR2944369, MR2270163, daoud2021fractional} and also Chapter 5 of the book \cite{MR0290095}.

For any $\gamma>0$, we will prove the existence of a positive weak solution to problem \eqref{Eq1} which involves two singular nonlinearities. See \cite[section 6]{MR2592976} for a discussion about mixed nonlinearities. Since problem \eqref{Eq1} involves a nonregular term, $\mu$, concerning the uniqueness, it is natural to consider the notion of entropy solution, which will be defined later in section \ref{Section2}. The motivation of the definition comes from the works \cite{MR1760541, MR1354907, MR1409661, C2} (see \cite[section 1]{MR3456751} for a very quick introduction, concerning the uniqueness, for the notion of renormalized, entropy, and SOLA solutions). In fact, in the case $ 0 < \gamma \leq 1$, and $\mu \in L^1(\Omega)+X^{-s}(\Omega)$, we will show the existence and uniqueness of the entropy solution to problem \eqref{Eq1} for some weighted integrable functions $f$. It is important to note that in \cite[sections 3 and 4]{MR3935063}, the authors discussed uniqueness for the notion of very weak solution to problem \eqref{Eq1} by invoking a Kato type inequality in the case $0<\gamma<1$, with $K \equiv 0$. Also, see the article \cite{MR3639996}, where the authors investigated the existence and uniqueness of a notion of a solution for the fractional $p$-Laplacian case of \eqref{Eq1} with $K, \mu \equiv 0$. Besides, see the works \cite{MR2592976, MR3450747} for some relaxation assumptions on $f$ to prove the existence of solutions.

We also refer the readers to the work \cite{MR3456751} where the author studied some integro-differential equations involving measure data by the duality method. Also, see \cite{MR2779579} for the duality approach to the fractional Laplacian with measure data. ‌Besides, see the work \cite{MR3339179} in which authors developed an existence, regularity, and potential theory for nonlinear non-local equations involving $L^1$ and measure data. Furthermore, recently G\'omez-Castro, and V\'azquez, \cite{MR4026184}, found the necessary and sufficient condition for the existence of the solutions for a fractional Schr\"odinger equation with singular potential and measure data. Also, see \cite{MR3865202} for their similar work in the context of the weighted space approach. 

In \cite{MR1037213}, Lazer and McKenna proved that if $\Omega$ has a regular boundary, and $p(x)$ is regular on $\overline{\Omega}$, and $\alpha$ is any positive number; then there is a unique classical solution $u \in C^2(\Omega) \cap C(\overline{\Omega})$ to the following problem:
\begin{equation}
\label{MOTO}
\begin{cases}
-\Delta  u =\dfrac{p(x)}{u^{\alpha}}   & \mathrm{in} \,\, \Omega \\
u>0   & \mathrm{in} \,\, \Omega \\
u=0   & \mathrm{on} \,\, \partial \Omega,
\end{cases}
\end{equation}
where $\Omega$ is a bounded domain of $\mathbb{R}^N$. This solution does not belong to $C^2(\overline{\Omega})$. Moreover, they found a necessary and sufficient condition with respect to $\alpha$, that this unique solution has a finite Dirichlet integral. More precisely, they proved that $u \in H_0^1(\Omega)$, if and only if $\alpha<3$. Also, they showed that $u \not \in C^1(\overline{\Omega})$, if $\alpha>1$. In article \cite{MR3480326} Bougherara, Giacomoni and Hernandez proved the existence of solutions to problem \eqref{MOTO}, for any $\alpha \in \mathbb{R}$, and $p \in C(\Omega)$ behaves as $\delta(x)^{-\beta}$, near the boundary, with $0 \leq \beta <2$. They discussed the existence, uniqueness, and stability of the weak solution. Also, they proved accurate estimates on the gradient of the solution near the boundary. Then, they proved that the solution belongs to $W^{1,q}_0(\Omega)$ for $1<q<\frac{1+\alpha}{\alpha+\beta-1}$. Our results in this paper are in the similar direction for the non-local case. Problems as in \eqref{MOTO} have been extensively studied both for their pure mathematical interest, \cite{MR3893792, MR3489386, MR3712944,MR2592976,MR3450747,MR427826,MR3935063, MR3639996, MR3797738, MR3356049, MR3645936, MR4044739, MR3943307, MR3323892, MR3906415, MR2989693, MR4160003}, and for their relations with some physical phenomena in the theory of pseudoplastic fluids, \cite{MR564014}. Moreover, see \cite{C1} for a $p$-Laplacian evolution problem with singular terms and nonregular data. Besides, see \cite{latorre2020dirichlet}, where the authors studied the $1$-Laplacian case of Lazer-McKenna problem in a suitable functional setting. Also, see \cite{alves2020uniqueness} for the $p(x)$-Laplacian case.

It is important to point out that in \cite{MR3819101}, Chipot and De Cave introduced new techniques for solving some class of singular elliptic equations with source term having a singularity at the origin. More precisely, they analyzed the following problem:
\begin{equation}
\label{ChipotDeCave}
\begin{cases}
-\mathrm{div} \big(\mathcal{A}(x,\nabla u) \big) =H(u) \mu & \mathrm{in} \,\, \Omega \\
u>0 & \mathrm{in} \,\, \Omega \\
u=0 & \mathrm{on} \,\, \partial \Omega,
\end{cases}
\end{equation}
where $\Omega$ is a bounded or unbounded domain of $\mathbb{R}^N$, $H$ is (possibly) singular at the origin like $H(s)=s^{-\gamma}$, $\gamma>0$, $\mathcal{A}: \mathbb{R}^N \times \mathbb{R}^N \to \mathbb{R}^N$ satisfies Leray-Lions conditions of $p$-Laplace type with $1<p<N$, and $\mu$ is a suitable non-negative datum. The main idea of their method is based on using the Lax-Milgram Theorem. Their technique's novelty is that it applies to the unbounded case of $\Omega$, too. They avoid using the notion of renormalized solution (introduced in \cite{MR1760541} for measure data problems) and instead use a suitable class of test functions in the weak (variational) formulation of problem \eqref{ChipotDeCave}. Also, see \cite{MR4027821, MR4102796} for the similar papers.

Adimurthi, Giacomoni and Santra \cite{MR3797614} investigated the existence and bifurcation results to the following equation involving the fractional Laplacian with singular nonlinearity:
\begin{equation}
\label{Eq3.01}
\begin{cases}
(-\Delta )^s u = \lambda \big(K(x)u^{-\delta} + f(u) \big) &   \mathrm{in} \,\, \Omega, \\
u>0 &   \mathrm{in} \,\, \Omega, \\ 
u=0 &   \mathrm{in} \,\, \big(\mathbb{R}^N \setminus \Omega \big). 
\end{cases}
\end{equation}
Here $\lambda>0$ and $f : \mathbb{R}^+ \to \mathbb{R}^+$ is a positive $C^2$ function and $K(x)$ is a H\"older continuous function in which behaves as $\mathrm{dist}(x,\partial \Omega)^{-\beta}$ near the boundary with $0 \leq \beta <2s$. First, for any $\delta>0$ and for $\lambda >$ small enough, they proved the existence of solutions to \eqref{Eq3.01}. Next, for a suitable range of values of $\delta$, they showed the existence of an unbounded connected branch of solutions to \eqref{Eq3.01} emanating from the trivial solution at $\lambda=0$. Also, for a certain class of nonlinearities $f$, they derived a global multiplicity result. 

In \cite{CRMATH_2020__358_2_237_0} Arora, Giacomoni, Goel, and Sreenadh studied the symmetry and monotony of the positive solutions to problem \eqref{Eq3.01} with $\lambda=1$, $K \equiv 1$, and $f(u)$ a locally Lipschitz function. For that, they implemented the moving plane method. Then by using this symmetry result, they investigated the global behavior of solutions.

In section \ref{Section2}, we will introduce the functional setting. Also, after defining the notions of the weak solution and entropy solution to problem \eqref{Eq1}, we will outline our theorems about the existence and uniqueness results to problem \eqref{Eq1}. In section \ref{Section3}, we will provide proof of these results, which extends our previous study \cite{bayrami2020existence}. In section \ref{Section4}, we will discuss the relaxing some assumptions on $f$ in order to still have the existence results.

\section{Functional setting and main result}
\label{Section2}
Let $ 0 < s <1 $ and $ 1 \leq p < \infty $. The classical fractional Sobolev space defines as follows:
$$ W^{s,p}(\mathbb{R}^N) = \Bigg\{ u \in L^p(\mathbb{R}^N) \, : \, \int_{\mathbb{R}^N} \int_{\mathbb{R}^N} \frac{|u(x)-u(y)|^p}{|x-y|^{N+ps}} \, dxdy < \infty \Bigg\} $$
endowed with the Gagliardo norm:
$$ \|u\|_{W^{s,p}(\mathbb{R}^N)} = \|u\|_{L^p(\mathbb{R}^N)} + \Bigg( \int_{\mathbb{R}^N} \int_{\mathbb{R}^N} \frac{|u(x)-u(y)|^p}{|x-y|^{N+ps}} \, dxdy \Bigg)^{\frac{1}{p}}. $$
Also, we define 
$$ 
X^{s,p}(\Omega) =\Bigg\{ u: \mathbb{R}^N \to \mathbb{R} \,\, \mathrm{measurable}, \, u|_{\Omega} \in L^p(\Omega),\, \iint_{D_{\Omega}} \frac{|u(x)-u(y)|^p}{|x-y|^{N+ps}} \, dxdy < \infty 
\Bigg\}, 
$$
where $D_{\Omega}= \mathbb{R}^N \times \mathbb{R}^N \setminus \Omega^{c} \times \Omega^{c} $, with $\Omega^{c}=\mathbb{R}^N \setminus \Omega$ and $\Omega$ is a bounded smooth domain in $\mathbb{R}^N$. This is a Banach space with the following norm:
\begin{equation}
\label{Eq3.1}
\|u\|_{X^{s,p}(\Omega)} = \Bigg( \int_{\Omega} |u|^p \, dx + \iint_{D_{\Omega}} \frac{|u(x)-u(y)|^p}{|x-y|^{N+ps}} \, dxdy \Bigg)^{\frac{1}{p}}. 
\end{equation}
In the case $p=2$, we denote by $X^s(\Omega)$ the space $X^{s,2}(\Omega)$ which is a Hilbert space with the following scalar product:
$$ \langle u,v \rangle_{X^s(\Omega)} = \int_{\Omega} uv \, dx + \iint_{D_{\Omega}} \frac{(u(x)-u(y))(v(x)-v(y))}{|x-y|^{N+2s}} \, dxdy. $$
Moreover, we define $ X_0^{s,p}(\Omega) = \{ u \in X^{s,p}(\Omega) \, : \, u=0 \,\, \mathrm{a.e. \,\, in} \,\,  (\mathbb{R}^N \setminus \Omega ) \}$. This space, $X_0^{s,p}(\Omega)$, can also be identified by the closure of $C_c^{\infty}(\Omega)$ in $X^{s,p}(\Omega)$. Also, we let $X_0^s(\Omega)$ denotes $X_0^{s,2}(\Omega)$. It is easy to see that: 
$$ \Bigg( \int_{\mathbb{R}^N} \int_{\mathbb{R}^N} \frac{|u(x)-u(y)|^p}{|x-y|^{N+ps}} \, dxdy \Bigg)^{\frac{1}{p}} = \Bigg( \iint_{D_{\Omega}} \frac{|u(x)-u(y)|^p}{|x-y|^{N+ps}} \, dxdy \Bigg)^{\frac{1}{p}}, $$
for any $u \in X_0^{s,p}(\Omega)$. This equality defines another norm equivalent to the norm \eqref{Eq3.1} for $X_0^{s,p}(\Omega)$. We denote this norm by $\|u\|_{X_0^{s,p}(\Omega)}$, i.e.
$$ \|u\|_{X_0^{s,p}(\Omega)} = \Bigg( \iint_{D_{\Omega}} \frac{|u(x)-u(y)|^p}{|x-y|^{N+ps}} \, dxdy \Bigg)^{\frac{1}{p}}. $$
Then there exists a positive constant $C$ such that the following inequalities hold for all $u \in X_0^{s,p}(\Omega)$.
$$ \|u\|_{X_0^{s,p}(\Omega)} \leq \|u\|_{W^{s,p}(\mathbb{R}^N)} \leq C \|u\|_{X_0^{s,p}(\Omega)}. $$
It is worth mentioning that the continuous embedding of $X_0^{s_2}(\Omega)$ into $X_0^{s_1}(\Omega)$, holds for any $s_1<s_2$. Besides, for the Hilbert space case, we have:
\begin{equation*}
\label{Eqre3.2}
\|u\|_{X_0^s(\Omega)}^2 = 2 C_{N,s}^{-1} \| (-\Delta)^{\frac{s}{2}} u\|_{L^2(\mathbb{R}^N)}^2,
\end{equation*}
where $C_{N,s}$ is the normalization constant in the definition of $(-\Delta)^s$. 

For the proofs of the above facts see \cite[subsection 2.2]{MR2879266} and \cite{MR2944369}. Also see \cite[subsection 3.2]{daoud2021fractional}.

For $ 0<r<\infty$, the Marcinkiewicz space $M^r(\Omega)$, is the set of all measurable functions $u: \Omega \to \mathbb{R}$, such that there exists $C>0$ with the following condition:
$$  \omega \Big( \Big\{x \in \Omega \, : \, |u(x)| \geq t \Big\} \Big) \leq \frac{C}{t^r}, \qquad \forall t>0. $$
Here $\omega$ denotes the Lebesgue measure on $\mathbb{R}^N$. This space is endowed with the following norm:
$$ \| u\|_{M^r(\Omega)} = \sup_{t>0} \, t \Bigg(\omega \Big( \Big\{x \in \Omega \, : \, |u(x)| \geq t \Big\} \Big) \Bigg)^{\frac{1}{r}}. $$
For every $1 < r < \infty$ and $0<\epsilon \leq r-1$, the following continuous embeddings hold, \cite{MR1354907}:
\begin{equation}
\label{Eq3.2}
L^r(\Omega) \hookrightarrow M^r(\Omega) \hookrightarrow L^{r-\epsilon}(\Omega).
\end{equation}
Also, the following continuous embedding will be used in this paper.
\begin{equation}
\label{Eq3.350}
X_0^{s,p}(\Omega) \hookrightarrow L^t(\Omega), \qquad \forall t \in[1, p_s^*],
\end{equation}
where $p_s^*=\frac{pN}{N-ps}$ is the Sobolev critical exponent. Moreover, this embedding is compact for $1 \leq t < p_s^*$. See \cite[Theorem 6.5 and Theorem 7.1]{MR2944369}.

Also we denote by $X_{\mathrm{loc}}^{s,p}(\Omega)$, the set of all functions $u$ such that $u \phi \in X_0^{s,p}(\Omega)$ for any $\phi \in C_c^{\infty}(\Omega)$. When we say $\{u_n\} \subset X_{\mathrm{loc}}^{s,p}(\Omega)$ is bounded, we mean that $\{\phi u_n\} \subset X_0^{s,p}(\Omega)$ is bounded for any fixed $\phi \in C_c^{\infty}(\Omega)$.

Since we are dealing with the non-local operator $(-\Delta)^s$, a new class of test functions should be defined precisely instead of the usual one $C_c^{\infty}(\Omega)$, i.e.
$$ \mathcal{T}(\Omega)= \Bigg \{ \phi:\mathbb{R}^N \to \mathbb{R}  \,\, \Bigg|\,\, \begin{aligned} & (-\Delta)^s \phi = \varphi, \,\,  \varphi \in  C^{\infty}_c(\Omega), \,\, \\
& \phi = 0 \,\,  \mathrm{in} \,\, \mathbb{R}^N \setminus \tilde{\Omega}, \,\,\mathrm{for \,\, some}  \,\, \tilde{\Omega} \Subset \Omega \end{aligned} \Bigg\}. $$
It can be shown that $\mathcal{T}(\Omega) \subset X_0^s(\Omega) \cap L^{\infty}(\Omega)$. Moreover, for every $\phi \in \mathcal{T}(\Omega)$, there exists a constant $\alpha \in (0,1)$ such that $\phi \in C^{0,\alpha}(\Omega)$. See \cite{MR3479207, MR3356049, MR3393266}.
It is easy to check that for $u\in X_0^s(\Omega)$ and $ \phi \in \mathcal{T}(\Omega)$:

\begin{align}
\nonumber
2 C_{N,s}^{-1} \int_{\mathbb{R}^N} u (-\Delta)^{s} \phi \, dx &= 2 C_{N,s}^{-1} \int_{\mathbb{R}^N}  (-\Delta)^{\frac{s}{2}} u (-\Delta)^{\frac{s}{2}} \phi \, dx \\
& = \iint_{D_{\Omega}} \frac{\big(u(x)-u(y) \big) \big(\phi(x)-\phi(y) \big)}{|x-y|^{N+2s}} \, dxdy. \label{Eq3.349n}
\end{align}

This equality shows the self-adjointness of $(-\Delta)^s$ in $X_0^s(\Omega)$. Also, one can show that $ (-\Delta)^s : X_0^s(\Omega) \to X^{-s}(\Omega) $ is a continuous strictly monotone operator, where $X^{-s}(\Omega)$ indicates the dual space of $X_0^s(\Omega)$.

\begin{definition}
We say that $u$ is a weak solution to \eqref{Eq1} if
\begin{itemize}
\item
$u \in L^1_{\mathrm{loc}}(\Omega)$, and for every $K \Subset \Omega$, there exists $C_K>0$ such that $u(x) \geq C_K$ a.e. in $K$ and also $u \equiv 0$ in $\big(\mathbb{R}^N \setminus \Omega \big)$;
\item
Equation \eqref{Eq1} is satisfied in the sense of distributions with the class of test functions $\mathcal{T}(\Omega)$, i.e.
\begin{equation}
\label{Eq3.4}
\int_{\mathbb{R}^N} u (-\Delta)^s \phi \, dx = \int_{\Omega} \frac{K(x)}{u^{q}} \phi \, dx + \int_{\Omega} \frac{f(x)}{u^{\gamma}} \phi \, dx  + \int_{\Omega} \mu \phi \, dx,
\end{equation}
for any $\phi \in \mathcal{T}(\Omega)$.
\end{itemize}
\end{definition}
\begin{remark}
\label{REMM01}
Note that since $\phi$ has compact support in $\Omega$, the first and second term on the right-hand side of \eqref{Eq3.4} are well defined by the strict positivity of $K(x)$ and $u$ on the compact subsets of $\Omega$. Also, we are allowed to assume that $\mu$ is a bounded Radon measure in $\Omega$ since the test functions belong to $C_c(\Omega)$. Moreover, notice that if the datum $\mu$ is regular enough to give the result that the solution $u$ satisfies $\phi u^{-\gamma} \in X_0^s(\Omega) \cap L^{\infty}(\Omega)$, for any $\phi \in \mathcal{T}(\Omega)$, then we may assume $f \in L^1(\Omega)+X^{-s}(\Omega)$, because of the duality between $L^1(\Omega)+X^{-s}(\Omega)$ and $X_0^{s}(\Omega) \cap L^{\infty}(\Omega)$. Since $\phi \in \mathcal{T}(\Omega)$ has compact support and $u$ is strictly positive on compact subsets of $\Omega$, this condition holds if $u^{-\gamma} \in X_{\mathrm{loc}}^s(\Omega)$. On the other hand, it is easy to see that since the map $ t \mapsto t^{-\gamma}$ is a Lipschitz function away from zero; thus $u^{-\gamma} \in X_{\mathrm{loc}}^s(\Omega)$ holds if $u \in X_{\mathrm{loc}}^s(\Omega)$.
\end{remark}

Concerning the uniqueness, we have another definition to solutions of \eqref{Eq1}. In fact we would like to consider the entropy solution. We will denote
$$
T_k(s) = \begin{cases}
s   & |s| \leq k \\
k \, \mathrm{sign} (s)   & |s| \geq k,
\end{cases}
$$
the usual truncation operator.
\begin{definition}
\label{DEFN}
Let $\mu \in L^1(\Omega)+X^{-s}(\Omega)$. We say that $u$ is an entropy solution to \eqref{Eq1} if
\begin{itemize} 
\item
for every $K \Subset \Omega$, there exists $C_K>0$ such that $u(x) \geq C_K$ in $K$ and also $u \equiv 0$ in $\big(\mathbb{R}^N \setminus \Omega \big)$;
\item
$T_k(u) \in X_0^s(\Omega)$, for every $k$, and $u$ satisfies the following family of inequalities:
\begin{equation}
\label{Eq3.500}
\int_{\{|u-\phi| < k \}} (-\Delta)^{\frac{s}{2}} u (-\Delta)^{\frac{s}{2}} (u-\phi) \, dx \leq \int_{\Omega} \frac{K(x)}{u^{q}} T_k(u-\phi) \, dx + \int_{\Omega} \frac{f(x)}{u^{\gamma}} T_k(u-\phi) + \int_{\Omega}  T_k(u-\phi) \, d\mu
\end{equation}
for any $k$ and any $\phi \in X_0^s(\Omega) \cap L^{\infty}(\Omega)$. 
\end{itemize}
We will see later that the first and second terms on the right-hand side are well defined after the construction of the entropy solution. Moreover, the last term $\int_{\Omega} T_k(u-\phi) \, d\mu $, is well-posed because of the duality between $L^1(\Omega)+X^{-s}(\Omega)$ and $X_0^s(\Omega)\cap L^{\infty}(\Omega)$. Notice that $L^1(\Omega)+X^{-s}(\Omega)$ embeds in the dual space of $X_0^s(\Omega)\cap L^{\infty}(\Omega)$.
\end{definition} 

\begin{theorem}
\label{Thm1}
Let $ s \in (0,1)$, $q>0$, $\gamma>0$, and $ 0 \leq f, \mu \in L^1(\Omega)$. Also assume that $K \in C^{\theta}_{\mathrm{loc}}(\Omega)$, $\theta \in (0,1)$, satisfies assumption \eqref{Con1}, and $0<\frac{\beta}{s}+q < 1$, or $\frac{\beta}{s}+q > 1$ with $2\beta+q(2s-1)<(2s+1)$. Then there is a positive weak solution to problem \eqref{Eq1}. More precisely,
\begin{enumerate}[leftmargin=*, label=(\roman*)]
\item
If $0< \gamma \leq 1$, then $u \in X_0^{s_1,p}(\Omega)$, for all $s_1<s$ and for all $p<\frac{N}{N-s}$. Moreover, $T_k(u) \in X_0^s(\Omega)$, for any $k>0$. 
\item
If $ \gamma > 1$, then $u \in X_{\mathrm{loc}}^{s_1,p}(\Omega)$, for all $s_1<s$ and for all $p<\frac{N}{N-s}$. Moreover, $T_k^{\frac{\gamma+1}{2}}(u) \in X_0^s(\Omega)$, and $T_k(u) \in X_{\mathrm{loc}}^s(\Omega)$, for any $k>0$.  
\end{enumerate}
Moreover, in the case $0<\gamma \leq 1$ if:
\begin{equation}
\label{EnASSUMPTIONS}
\begin{cases}
\displaystyle\int_{\Omega} f \delta^{-s\gamma} \, dx < + \infty \,\, & 0<\dfrac{\beta}{s}+q<1, \\
\\
\displaystyle\int_{\Omega} f\delta^{-\frac{2s-\beta}{q+1} \gamma} \, dx < + \infty \,\, & \dfrac{\beta}{s}+q>1, \, \text{and} \,\,\,  2\beta+q(2s-1)<(2s+1), \\
\end{cases}
\end{equation}
then, there exists a unique positive entropy solution to problem \eqref{Eq1}.
\end{theorem}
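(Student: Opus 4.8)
The plan is to proceed by approximation. For $n \in \mathbb{N}$ set $f_n = T_n(f)$, $\mu_n = T_n(\mu)$ and consider the non-singular problems
\begin{equation*}
(-\Delta)^s u_n = \frac{K(x)}{\left(u_n + \tfrac1n\right)^q} + \frac{f_n(x)}{\left(u_n + \tfrac1n\right)^{\gamma}} + \mu_n \quad \text{in } \Omega, \qquad u_n = 0 \text{ in } \mathbb{R}^N \setminus \Omega.
\end{equation*}
For fixed $n$ the nonlinearity is no longer singular, and although the factor $\delta^{-\beta}$ in $K$ may fail to be integrable, the fractional Hardy inequality (valid since $\beta<2s$) places the whole right-hand side in $X^{-s}(\Omega)$; existence of a nonnegative $u_n \in X_0^s(\Omega)$ then follows from the theory of monotone operators, and the strict positivity of $K$ together with the strong maximum principle gives $u_n>0$. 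Because $t\mapsto (t+\tfrac1n)^{-q}$ and the truncations increase with $n$, the comparison principle (the nonlinearity being decreasing in $u_n$) yields that $\{u_n\}$ is nondecreasing, and comparison with the solution of $(-\Delta)^s w = c\,\chi_{\omega}$ for a fixed $\omega \Subset \Omega$ produces the uniform interior lower bound: for each $K \Subset \Omega$ there is $C_K>0$, independent of $n$, with $u_n \geq C_K$ on $K$.

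The technical core is the uniform a priori estimate. Testing the approximate equation with $T_k(u_n)$ and using \eqref{Eq3.349n}, the left-hand side bounds $\|T_k(u_n)\|_{X_0^s(\Omega)}^2$ from below, while the delicate right-hand term $\int_\Omega K\,(u_n+\tfrac1n)^{-q}\,T_k(u_n)$ carries the boundary singularity $\delta^{-\beta}$. The fractional Hardy--Sobolev inequality is the essential device for absorbing it: the two exponent hypotheses --- $0<\tfrac{\beta}{s}+q<1$, or $\tfrac{\beta}{s}+q>1$ with $2\beta+q(2s-1)<2s+1$ --- are precisely what make the relevant powers line up so that this term is reabsorbed into the energy rather than dominating it. This gives, for $0<\gamma\le1$, a uniform bound on $T_k(u_n)$ in $X_0^s(\Omega)$; for $\gamma>1$ one instead tests with a suitable power of the truncation to control the stronger singularity, obtaining a uniform bound on $T_k^{(\gamma+1)/2}(u_n)$. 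To pass from truncated energy bounds to the global estimate of $u_n$ in $X_0^{s_1,p}(\Omega)$ with $s_1<s$, $p<\frac{N}{N-s}$, I would carry out the Boccardo--Gallou\"et measure-data argument in its fractional form, estimating the super-level sets $\{u_n>k\}$ and interpolating, the $L^1$ bound on $\mu_n$ entering here.

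With these estimates the compact embedding $X_0^{s_1,p}(\Omega)\hookrightarrow L^t(\Omega)$ of \eqref{Eq3.350} gives $u_n\to u$ in $L^t(\Omega)$ and a.e.; monotonicity upgrades this to monotone pointwise convergence, so $u$ enjoys the interior lower bounds and, by weak lower semicontinuity, the regularity claimed in (i)--(ii). Passing to the limit in \eqref{Eq3.4} is then straightforward on compact sets: for fixed $\phi\in\mathcal{T}(\Omega)$ the interior lower bound $u\ge C_{\mathrm{supp}\,\phi}>0$ dominates the singular integrands, so dominated (or monotone) convergence handles the $K$- and $f$-terms, while $\int u_n(-\Delta)^s\phi\to\int u(-\Delta)^s\phi$ by $L^1$ convergence and $\int\mu_n\phi\to\int\mu\phi$ since $\phi\in C_c(\Omega)$. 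This proves the existence assertions.

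For uniqueness of the entropy solution when $0<\gamma\le1$, I would take two entropy solutions $u_1,u_2$ and combine their inequalities \eqref{Eq3.500}, using the cross test functions $\phi=T_h(u_2)$ and $\phi=T_h(u_1)$ in the spirit of \cite{MR1760541,MR1409661}. When the two inequalities are added, the left-hand sides assemble into a nonnegative quantity in $u_1-u_2$ by the monotonicity of $(-\Delta)^s$, the measure terms cancel, and --- crucially --- since $t\mapsto t^{-q}$ and $t\mapsto t^{-\gamma}$ are \emph{decreasing}, the singular contributions carry the favourable sign $\int K(u_1^{-q}-u_2^{-q})(u_1-u_2)\le0$ and likewise for the $f$-term, so they may be discarded; letting $h\to\infty$ then forces $u_1=u_2$. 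The weighted integrability hypotheses \eqref{EnASSUMPTIONS} are needed exactly to ensure $f\,u_i^{-\gamma}\in L^1(\Omega)$ --- through the boundary behaviour $u_i\sim\delta^{s}$ when $\tfrac{\beta}{s}+q<1$ and $u_i\sim\delta^{(2s-\beta)/(q+1)}$ when $\tfrac{\beta}{s}+q>1$ --- so that every term in \eqref{Eq3.500} is finite and the passage to the limit is legitimate. The main obstacle throughout is the control of the $K$-term: the boundary singularity $\delta^{-\beta}$ is genuinely non-integrable against generic test functions, and only the sharp balancing encoded in the exponent conditions, exploited via the fractional Hardy--Sobolev inequality, keeps it in check.
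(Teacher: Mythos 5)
Your overall architecture coincides with the paper's: approximate, test with truncations, control the $K$-term through the fractional Hardy--Sobolev inequality, run a Boccardo--Gallou\"et argument for the $X_0^{s_1,p}$ bound, pass to the limit against $\phi\in\mathcal{T}(\Omega)$, and cross-test two entropy solutions for uniqueness. But there are two concrete gaps. First, you regularize the $K$-term as $K(x)(u_n+\tfrac1n)^{-q}$, whereas the paper deliberately keeps it singular in the approximating problems \eqref{Eq4.75}: by the quoted result of Adimurthi--Giacomoni--Santra (Proposition \ref{Pro1}) the auxiliary solutions then lie in the class $\mathcal{C}_q$, i.e.\ satisfy the \emph{two-sided} boundary estimate $w_n\sim\delta^s$ (resp.\ $\delta^{(2s-\beta)/(q+1)}$), and this is the engine of every estimate of the $K$-term --- \eqref{ESTIM1} and \eqref{ESTIM2} read $\int_\Omega K\phi/w_n^q\sim\int_\Omega\delta^{-\beta-sq}\phi\le C\|\phi\|_{X_0^{sq+\beta}(\Omega)}\le C\|\phi\|_{X_0^s(\Omega)}$, and the exponent hypotheses are exactly what make these chains valid. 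Your write-up never establishes any boundary lower bound for the regularized approximants; saying that ``the relevant powers line up'' is not a proof, and with your scheme you would need an additional comparison with the purely singular auxiliary problem to recover $u_n+\tfrac1n\gtrsim\delta^s$ before Hardy--Sobolev can even be invoked. Relatedly, your assertion that $\beta<2s$ alone places $K\sim\delta^{-\beta}$ in $X^{-s}(\Omega)$ via Hardy is false in general (one needs $\beta<s+\tfrac12$, plus convexity when $s<\tfrac12$); it happens to follow from the theorem's exponent hypotheses, but not from the reason you give.

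Second, the theorem asserts \emph{existence and} uniqueness of the entropy solution under \eqref{EnASSUMPTIONS}, and you only address uniqueness: nowhere do you show that the limit $u$ actually satisfies the family of inequalities \eqref{Eq3.500} for all $\phi\in X_0^s(\Omega)\cap L^\infty(\Omega)$, which is not automatic because such $\phi$ need not have compact support, so the singular terms are no longer tamed by interior positivity. The paper's construction uses the increasing structure of the approximations to get strong convergence of $T_k(u_n-\phi)$ in $X_0^s(\Omega)$, dominates $K\,T_k(u_n-\phi)/u_n^q$ by $K|T_k(u-\phi)|/u_1^q\in L^1(\Omega)$ via \eqref{ESTIM1}--\eqref{ESTIM2}, and uses \eqref{EnASSUMPTIONS} together with the boundary behaviour $u_1\gtrsim\delta^s$ (resp.\ $\delta^{(2s-\beta)/(q+1)}$) to dominate $f_nT_k(u_n-\phi)/(u_n+\tfrac1n)^\gamma$ by $kf/u_1^\gamma\in L^1(\Omega)$. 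Your uniqueness sketch itself (cross-testing, sign of the monotone singular terms, cancellation of the measure terms as $h\to\infty$) is the same as the paper's and is fine, but as submitted the proposal proves less than the statement claims.
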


\begin{theorem}
\label{Thm2}
Under the hypothesis of Theorem \ref{Thm1}, if we further assume that
\begin{equation}
\label{ASSUMPTIONS}
\begin{cases}
\displaystyle\int_{\Omega} f^2 \delta^{2s(1-\gamma)} \, dx < + \infty \,\, & 0<\dfrac{\beta}{s}+q<1, \\
\\
\displaystyle\int_{\Omega} f^2\delta^{2s-2\gamma \frac{2s-\beta}{q+1}} \, dx < + \infty \,\, & \dfrac{\beta}{s}+q>1, \, \text{and} \,\,\, 2\beta+q(2s-1)<(2s+1), \\
\end{cases}
\end{equation}
then the uniqueness is guaranteed. More precisely,
\begin{enumerate}[leftmargin=*, label=(\roman*)]
\item
If $ \gamma > 1$, then the weak solution obtained in Theorem \ref{Thm1} is unique.  
\item
If $0< \gamma \leq 1$, then the weak solution and the entropy solution obtained in Theorem \ref{Thm1} are equivalent.
\end{enumerate}
\end{theorem}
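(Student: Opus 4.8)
The plan is to derive uniqueness from the strict monotonicity of the operator $(-\Delta)^s$ together with the fact that both nonlinearities $t \mapsto K(x) t^{-q}$ and $t \mapsto f(x) t^{-\gamma}$ are strictly decreasing in $t$. The extra integrability hypothesis \eqref{ASSUMPTIONS} enters only to guarantee that the singular forcing terms, evaluated at a solution, define bounded linear functionals on $X_0^s(\Omega)$; this is precisely what upgrades the weak formulation \eqref{Eq3.4}, a priori valid only against $\mathcal{T}(\Omega)$, to one that can be tested against the difference of two solutions.

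First I would establish the key preliminary estimate: if $u$ is the solution produced in Theorem \ref{Thm1}, then $f u^{-\gamma} \in X^{-s}(\Omega)$. By the boundary behaviour recorded there, $u \gtrsim \delta^s$ when $0 < \frac{\beta}{s} + q < 1$ and $u \gtrsim \delta^{(2s-\beta)/(q+1)}$ when $\frac{\beta}{s}+q>1$. Hence for any $v \in X_0^s(\Omega)$, Cauchy--Schwarz and the fractional Hardy inequality give
$$\left| \int_\Omega f u^{-\gamma} v \, dx\right| \le \left(\int_\Omega f^2 u^{-2\gamma} \delta^{2s}\, dx\right)^{1/2}\left(\int_\Omega \frac{v^2}{\delta^{2s}}\,dx\right)^{1/2}\le C\left(\int_\Omega f^2 u^{-2\gamma}\delta^{2s}\,dx\right)^{1/2}\|v\|_{X_0^s(\Omega)},$$
and inserting the two boundary rates turns the first factor into exactly the finite integrals appearing in \eqref{ASSUMPTIONS}. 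The term $K u^{-q}$ is handled the same way, its membership in $X^{-s}(\Omega)$ being encoded in the structural condition $2\beta+q(2s-1)<(2s+1)$ and the bound \eqref{Con1}.

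For part (i), let $u_1, u_2$ be two weak solutions with $\gamma>1$. Subtracting the two copies of \eqref{Eq3.4} the datum $\mu$ cancels, so in the distributional sense $(-\Delta)^s(u_1-u_2)=K(u_1^{-q}-u_2^{-q})+f(u_1^{-\gamma}-u_2^{-\gamma})$. I would then invoke a Kato-type inequality for $(u_1-u_2)^+$ --- the same device used in \cite{MR3935063} --- which is licit here because the previous step makes the right-hand side a well-defined element of $X^{-s}(\Omega)$ while $T_k(u_i)\in X^s_{\mathrm{loc}}(\Omega)$. On $\{u_1>u_2\}$ both differences $u_1^{-q}-u_2^{-q}$ and $u_1^{-\gamma}-u_2^{-\gamma}$ are pointwise negative, whence $(-\Delta)^s(u_1-u_2)^+\le 0$; since $(u_1-u_2)^+$ vanishes in $\mathbb{R}^N\setminus\Omega$, the maximum principle forces $(u_1-u_2)^+\equiv 0$, i.e. $u_1\le u_2$, and exchanging the roles of $u_1,u_2$ yields equality.

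For part (ii) with $0<\gamma\le 1$, the entropy solution is already unique by Theorem \ref{Thm1}, so it suffices to show that the weak solution is an entropy solution and conversely. Using $T_k(u)\in X_0^s(\Omega)$ together with the preliminary step, I would extend the admissible test class in \eqref{Eq3.4} from $\mathcal{T}(\Omega)$ to $X_0^s(\Omega)\cap L^\infty(\Omega)$ by density, then test with $T_k(u-\phi)$ and restrict the contribution of $(-\Delta)^{s/2}u$ to $\{|u-\phi|<k\}$ to recover the family of inequalities \eqref{Eq3.500}; the reverse implication follows by choosing $\phi=u\mp\varepsilon\psi$ in the entropy formulation and letting $k\to\infty$, $\varepsilon\to 0$. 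The main obstacle throughout is the rigorous justification of these manipulations --- applying Kato's inequality and passing to the limit under the integral sign --- in a setting where the solution is singular at $\partial\Omega$ and only locally of finite energy; the whole point of hypothesis \eqref{ASSUMPTIONS} is to make the boundary integrals converge so that the comparison in part (i) and the identification of the two notions of solution in part (ii) both go through.
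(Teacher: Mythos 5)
Your preliminary step is exactly the mechanism the paper uses: hypothesis \eqref{ASSUMPTIONS}, combined with the boundary behaviour $u\gtrsim\delta^s$ (resp.\ $u\gtrsim\delta^{\frac{2s-\beta}{q+1}}$) coming from the comparison principle, Cauchy--Schwarz and the fractional Hardy inequality, upgrades the weak formulation so that the difference of two solutions can be tested against functions in $X_0^s(\Omega)\cap L^{\infty}(\Omega)$. Your part (ii) is also essentially the paper's: once the test class is enlarged, one tests the difference of the two weak formulations with $T_k(w^{\mp})\in X_0^s(\Omega)$, uses the monotonicity of $t\mapsto t^{-q}$ and $t\mapsto t^{-\gamma}$ to kill the right-hand side, and concludes that weak solutions are unique, hence coincide with the (already unique) entropy solution; your two-way identification is a slightly longer road to the same place.

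Where you genuinely diverge is part (i), and that is also where the gap sits. The paper does \emph{not} use a Kato-type inequality there: it subtracts the two weak formulations (equation \eqref{Uniquness1}), tests with $T_k^{\frac{\gamma+1}{2}}(w^-)$ --- which is the correct object because for $\gamma>1$ only this power of the truncation is known to lie in $X_0^s(\Omega)$, by Theorem \ref{Thm1}(ii) --- and then uses the bilinear identity \eqref{Eq3.349n} together with the elementary inequality \eqref{EqElementary} to bound $\|T_k^{\frac{\gamma+3}{4}}(w^-)\|_{X_0^s(\Omega)}^2$ by the (sign-definite) right-hand side. Your route instead asserts a non-local Kato inequality $(-\Delta)^s w^+\le \chi_{\{w>0\}}(-\Delta)^s w$ followed by a maximum principle. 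That tool is not established anywhere in the paper; the reference \cite{MR3935063} that the authors cite for a Kato-type argument treats only $0<\gamma<1$ with $K\equiv 0$, and extending it to solutions that are merely of locally finite energy, singular at $\partial\Omega$, and driven by a right-hand side known only to lie in $L^1_{\mathrm{loc}}(\Omega)+X^{-s}(\Omega)$ is precisely the kind of delicate step you flag as ``the main obstacle'' but do not resolve. Moreover, on your route the hypothesis \eqref{ASSUMPTIONS} is left without a clear job: membership of $fu^{-\gamma}$ in $X^{-s}(\Omega)$ is not what a Kato inequality or a very-weak maximum principle consumes (those want weighted $L^1$ control of the datum), whereas in the paper's argument \eqref{ASSUMPTIONS} is exactly what licenses $T_k^{\frac{\gamma+1}{2}}(w^-)$ as a test function. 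To close the proof you should either supply (with proof or precise citation, including the verification that the right-hand side lies in the admissible class) the fractional Kato inequality and maximum principle you invoke, or switch to the paper's direct energy argument, which needs no such machinery.
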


\section{Proof of Theorem \ref{Thm1} and Theorem \ref{Thm2}}
\label{Section3}

First of all, we consider the following auxiliary problem: 
\begin{equation}
\label{Eq4}
\begin{cases}
(-\Delta )^s u =  K(x)u^{-q} +  g &   \mathrm{in} \,\, \Omega, \\ u>0 &   \mathrm{in} \,\, \Omega, \\ u=0 &   \mathrm{in} \,\, \big(\mathbb{R}^N \setminus \Omega \big),
\end{cases}
\end{equation}
where $ 0 \leq g \in L^{\infty}(\Omega)$. The existence and uniqueness of a weak energy solution to problem \eqref{Eq4} is known, see \cite[Theorem 1.2 and Theorem 1.3 and also Remark 1.5]{MR3797614}. For the reader convenience, we recall it in the following Proposition. Before we get into the Proposition, we need to define the set $\mathcal{C}_q$ as the set of all functions in $u \in L^{\infty}(\Omega)$ such that there exist $k_1,k_2>0$ such that
\begin{equation}
\label{Eq4.5}
\begin{cases}
k_1 \delta^s(x) \leq u(x) \leq k_2 \delta^s(x), \,\, & 0<\frac{\beta}{s}+q<1, \\
k_1 \delta^{\frac{2s-\beta}{q+1}} (x) \leq u(x) \leq k_2 \delta^{\frac{2s-\beta}{q+1}} (x) , \,\, & \frac{\beta}{s}+q>1,
\end{cases}
\end{equation}

\begin{proposition}
\label{Pro1}
If $ g \in L^{\infty}(\Omega) $, $g \geq 0$, $s \in (0,1)$, $q>0$, and $K \in C^{\theta}_{\mathrm{loc}}(\Omega)$, $\theta \in (0,1)$, satisfies assumption \eqref{Con1}, then
\begin{enumerate}[leftmargin=*, label=(\roman*)]
\item
If $0<\frac{\beta}{s}+q < 1$, then there exists a unique weak energy solution to problem \eqref{Eq4} in $ X_0^s(\Omega) \cap \mathcal{C}_q \cap C^{0,s}(\mathbb{R}^N)$.
\item
If $\frac{\beta}{s}+q > 1$, and $2\beta+q(2s-1)<(2s+1)$, then there exists a unique weak energy solution to problem \eqref{Eq4} in $ X_0^s(\Omega) \cap \mathcal{C}_q \cap C^{0,\frac{2s-\beta}{(q+1)s}}(\mathbb{R}^N)$.
\end{enumerate}
\end{proposition}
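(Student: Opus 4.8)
The plan is to recover this (known) result via the standard combination of an approximation scheme, precise barrier estimates built from powers of the distance function, and a monotonicity argument for uniqueness. Since $g \in L^\infty(\Omega)$, the only genuine difficulty is the singular term $K(x)u^{-q}$, both because of the interior singularity as $u \to 0$ and because of the boundary blow-up $K \sim \delta^{-\beta}$; the whole point is to show that these two effects conspire to force the boundary behaviour $u \sim \delta^a$ with $a = s$ in case (i) and $a = \frac{2s-\beta}{q+1}$ in case (ii), i.e. exactly the profiles defining $\mathcal{C}_q$.

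First I would regularize. For $n \in \mathbb{N}$ set $K_n = \min\{K, n\}$ and solve the non-singular problems $(-\Delta)^s u_n = K_n(x)(u_n + \tfrac1n)^{-q} + g$ in $\Omega$, with $u_n = 0$ in $\mathbb{R}^N \setminus \Omega$. Since the right-hand side is bounded and the map $t \mapsto (t + \tfrac1n)^{-q}$ is bounded and decreasing, existence of a unique $u_n \in X_0^s(\Omega) \cap L^\infty(\Omega)$ follows from the standard theory for $(-\Delta)^s$ with bounded data together with a Schauder (or monotone-iteration) fixed-point argument, while the comparison principle for the strictly monotone operator $(-\Delta)^s$ gives $0 \le u_n \le u_{n+1}$, strict positivity of $u_n$ on compact subsets of $\Omega$, and (by interior regularity theory) Hölder continuity of each $u_n$.

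The heart of the argument, and the step I expect to be the main obstacle, is the construction of $n$-uniform barriers $c_1 \delta^a \le u_n \le c_2 \delta^a$. Using the known computation that $(-\Delta)^s(\delta^a) \sim \delta^{a-2s}$ for $0 < a < 2s$, $a \ne s$, whereas $(-\Delta)^s(\delta^s)$ stays bounded near $\partial\Omega$, one checks that the exponent balancing $(-\Delta)^s u \sim \delta^{a-2s}$ against the source $K u^{-q} \sim \delta^{-\beta - qa}$ is precisely $a = \frac{2s-\beta}{q+1}$; this lies below $s$ exactly when $\frac{\beta}{s}+q > 1$ (case (ii)), while in case (i) the bounded torsion-type profile $\delta^s$ dominates the source. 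With these barriers one has $u_n \in \mathcal{C}_q$ uniformly in $n$. The uniform energy bound then comes from testing with $u_n$: the only delicate term is $\int_\Omega K_n (u_n + \tfrac1n)^{-q} u_n \le \int_\Omega K u_n^{1-q} \lesssim \int_\Omega \delta^{-\beta + a(1-q)}$, and since $\int_\Omega \delta^{-c}\,dx < \infty$ iff $c<1$, this is finite iff $-\beta + a(1-q) > -1$; a direct computation shows that in case (ii) this is exactly the hypothesis $2\beta + q(2s-1) < 2s+1$, and in case (i) it is automatic. Hence $(u_n)$ is bounded in $X_0^s(\Omega)$, and I would extract a weak limit $u$, pass to the limit in the weak formulation (the singular term handled by monotone convergence together with the lower barrier), and obtain $u \in X_0^s(\Omega) \cap \mathcal{C}_q$, the stated Hölder regularity following from the boundary behaviour and interior estimates.

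Finally, uniqueness follows from strict monotonicity. Given two solutions $u,v$ in the energy class, subtracting the two equations cancels $g$ and I would test with $(u-v)^+ \in X_0^s(\Omega)$: the left-hand side yields $\langle (-\Delta)^s u - (-\Delta)^s v,\, (u-v)^+ \rangle \ge \|(u-v)^+\|_{X_0^s(\Omega)}^2 \ge 0$ via the pointwise inequality $(w(x)-w(y))(w^+(x)-w^+(y)) \ge (w^+(x)-w^+(y))^2$ with $w=u-v$, while the right-hand side equals $\int_\Omega K\,(u^{-q} - v^{-q})(u-v)^+ \le 0$ because $t \mapsto t^{-q}$ is strictly decreasing. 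Both signs together force $(u-v)^+ = 0$, hence $u \le v$, and by symmetry $u = v$; the lower barriers in $\mathcal{C}_q$ guarantee all integrals are finite.
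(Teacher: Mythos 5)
The paper does not actually prove Proposition \ref{Pro1}: it is quoted verbatim from Adimurthi, Giacomoni and Santra \cite{MR3797614} (their Theorems 1.2--1.3 and Remark 1.5), so there is no in-paper argument to compare against. That said, your outline is a faithful reconstruction of the strategy used in that reference: regularize the singular term, build $n$-uniform barriers $c_1\delta^a\le u_n\le c_2\delta^a$ from the known asymptotics of $(-\Delta)^s\delta^a$, and your exponent bookkeeping is right --- balancing $\delta^{a-2s}$ against $\delta^{-\beta-qa}$ gives $a=\frac{2s-\beta}{q+1}$, which falls below $s$ exactly when $\frac{\beta}{s}+q>1$, and the integrability condition $-\beta+a(1-q)>-1$ for the energy term does reduce to $2\beta+q(2s-1)<2s+1$ in case (ii) and is vacuous in case (i). The uniqueness argument via $(u-v)^+$ and the monotonicity of $t\mapsto t^{-q}$ is also the standard one and is sound, given that both solutions lie in $X_0^s(\Omega)\cap\mathcal{C}_q$ so all integrals converge (the paper's estimates \eqref{ESTIM1}--\eqref{ESTIM2} make this precise via the fractional Hardy inequality, which is where the convexity hypothesis for $s<\frac12$ enters; your direct computation with the barriers is an acceptable substitute for the specific test function $u_n$).

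Two points you pass over too quickly. First, the barrier construction is genuinely delicate: $(-\Delta)^s\delta^a\sim\delta^{a-2s}$ holds with a sign that flips as $a$ crosses $s$, so the sub- and supersolutions must be assembled with some care (typically $\delta^a$ corrected by a multiple of the torsion function or first eigenfunction), and this is the technical core of \cite{MR3797614} rather than a one-line check. Second, your closing claim that ``the stated H\"older regularity follows from the boundary behaviour and interior estimates'' does not obviously produce the exponent $\frac{2s-\beta}{(q+1)s}$ appearing in case (ii): a function comparable to $\delta^{a}$ with $a=\frac{2s-\beta}{q+1}$ near $\partial\Omega$ cannot be better than $C^{0,a}$ up to the boundary, whereas $\frac{a}{s}>a$; so either the exponent in the statement should be read as $\frac{2s-\beta}{q+1}$ or a separate argument is needed, and your sketch does not address this.
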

The notion of the solution to \eqref{Eq4} is as follows. The function $u \in X_0^s(\Omega)$ is a weak energy solution to problem \eqref{Eq4} if
\begin{itemize} 
\item
for every $K \Subset \Omega$, there exists $C_K>0$ such that $u(x) \geq C_K$ in $K$ and also $u \equiv 0$ in $\big(\mathbb{R}^N \setminus \Omega \big)$;
\item
for every $\phi \in X_0^s(\Omega)$, we have:
\begin{equation}
\label{Eq4.600}
\int_{\mathbb{R}^N} (-\Delta)^{\frac{s}{2}} u (-\Delta)^{\frac{s}{2}} \phi \, dx = \int_{\Omega} K(x)u^{-q} \phi \, dx + \int_{\Omega} g \phi \, dx.
\end{equation}
\end{itemize}
Note that the first term on the right-hand side of the above equality is well defined by \eqref{Eq4.5} and applying the H\"older inequality and the fractional Hardy-Sobolev inequality (and convexity of $\Omega$ only for $0<s<\frac{1}{2}$), \cite[Theorem 1.1]{MR3814363}. More precisely, since $K(x)$ behaves like $\delta^{-\beta}(x)$, near the boundary and 
\begin{equation}
\label{EqBNTBOU}
u(x) \sim \begin{cases}
\delta^s (x), \qquad & 0<\frac{\beta}{s}+q<1, \\
\delta^{\frac{2s-\beta}{q+1}} (x), \qquad & \frac{\beta}{s}+q>1
\end{cases}
\end{equation}
near the boundary, we get the following estimates which will be used several times later. We distinguish some cases:
\begin{enumerate}[leftmargin=*, label=(\alph*)]
\item
$0<\frac{\beta}{s}+q < 1$.
\item
$\frac{\beta}{s}+q > 1$, with $2\beta+q(2s-1)<(2s+1)$.
\end{enumerate}
\subsection*{Case (a): Estimate for the case $0<\frac{\beta}{s}+q < 1$.}

\begin{align} 
\nonumber
\int_{\Omega} K(x) \frac{\phi}{u^q} \, dx \sim \int_{\Omega} \frac{\phi}{\delta^{sq+\beta}} \, dx  & \leq C_1 \Bigg(\int_{\Omega} \frac{\phi^2}{\delta^{2sq+2\beta}} \, dx\Bigg)^{\frac{1}{2}} \\
& \leq C_2 \|\phi\|_{X_0^{sq+\beta}(\Omega)} \leq C_3 \|\phi\|_{X_0^{s}(\Omega)}. \label{ESTIM1}
\end{align} 
Here in the last inequality we have used $0<\frac{\beta}{s}+q < 1$, and the continuous embedding of $X_0^{s_2}(\Omega)$ into $X_0^{s_1}(\Omega)$, for any $s_1<s_2$.
\subsection*{Case (b): Estimate for the case $ \frac{\beta}{s}+q > 1$.}
$$
\begin{aligned}
\int_{\Omega} K(x) \frac{\phi}{u^q} \, dx \sim \int_{\Omega} \frac{\phi}{\delta^{q{\frac{2s-\beta}{q+1}}+\beta}} \, dx & \leq \Bigg( \int_{\Omega} \frac{1}{\delta^{{(2s-\beta)\frac{q-1}{q+1}}+\beta}} \, dx \Bigg)^{\frac{1}{2}} \Bigg( \int_{\Omega} \frac{\phi^2}{\delta^{2s}} \, dx \Bigg)^{\frac{1}{2}}\\
& \leq C_4\Bigg( \int_{\Omega} \frac{1}{\delta^{{(2s-\beta)\frac{q-1}{q+1}}+\beta}} \, dx \Bigg)^{\frac{1}{2}} \|\phi\|_{X_0^{s}(\Omega)}.
\end{aligned}
$$
If in addition we assume $2\beta+q(2s-1)<(2s+1)$, i.e. $(2s-\beta)\frac{q-1}{q+1}+\beta<1$, then
\begin{equation}
\label{ESTIM2}
\int_{\Omega} K(x) \frac{\phi}{u^{q}} \, dx \leq C_5 \|\phi\|_{X_0^{s}(\Omega)}.  
\end{equation}

For general domains with some boundary regularity, the fractional Hardy-Sobolev inequality is proved for $s \in[ \frac{1}{2},1)$. See \cite{MR3933834, MR2085428, MR3021545}. But in \cite{MR3814363}, the authors proved the fractional Hardy-Sobolev inequality for any $s \in (0,1)$, by using the fact that the domain is a convex set and its distance from the boundary is a superharmonic function.

Now, for every $v \in L^2(\Omega)$, define $\Phi(v)=w$ where $w$ is the solution to the following problem for any fixed $n$:
\begin{equation}
\label{Eq5}
\begin{cases}
(-\Delta )^s w = K(x)w^{-q} +f_n(x)(|v|+\frac{1}{n})^{-\gamma}+\mu_n &   \mathrm{in} \,\, \Omega,\\ w>0 &   \mathrm{in} \,\, \Omega, \\ w=0 &   \mathrm{in} \,\,  \big(\mathbb{R}^N \setminus \Omega \big). 
\end{cases}
\end{equation}
Here $ f_n = T_n(f) $ and $ \mu_n = T_n(\mu) $ are the truncations at level $n$. If we show that $\Phi : L^2(\Omega) \to L^2(\Omega) $ has a fixed point $w_n$, then $w_n \in L^2(\Omega)$ will be the weak solution to the following problem in $X_0^s(\Omega) \cap \mathcal{C}_q$.
\begin{equation}
\label{Eq4.75}
\begin{cases}
(-\Delta )^s w_n =  K(x)w_n^{-q} + \dfrac{f_n(x)}{(w_n+\frac{1}{n})^{\gamma}} + \mu_n &   \mathrm{in} \,\, \Omega, \\ w_n>0 &   \mathrm{in} \,\, \Omega, \\ w_n=0 &   \mathrm{in} \,\, \big(\mathbb{R}^N \setminus \Omega \big).
\end{cases}
\end{equation}
For this purpose, we apply the Schauder's fixed-point theorem (see for example \cite[Theorem 3.2.20]{MR3890060}). We need to prove that $\Phi$ is continuous, compact and there exists a bounded convex subset of $L^2(\Omega)$ which is invariant under $\Phi$. 

For continuity let $v_k \to v$ in $L^2(\Omega)$. It is obvious that for each $n$:
$$ \Bigg\| \Bigg(f_n \Big(|v_k|+\frac{1}{n} \Big)^{-\gamma} \Bigg)-\Bigg(f_n \Big(|v|+\frac{1}{n} \Big)^{-\gamma}\Bigg) \Bigg\|_{L^2(\Omega)} \to 0, \qquad k \to \infty. $$
Now, from the uniqueness of the weak solution to \eqref{Eq4}, we conclude $\Phi(v_k) \to \Phi(v)$. 

For compactness, we argue as follows. For $v \in L^2(\Omega)$, let $w$ be the solution to \eqref{Eq5}. If $\lambda_1^s(\Omega)$ is the first eigenvalue of $(-\Delta)^s$ in $X_0^s(\Omega)$, \cite[Proposition 9]{MR3002745}, then we have
\begin{equation}
\label{Eq7}
\lambda_1^s(\Omega) \int_{\Omega} w^2 \, dx \leq \int_{\mathbb{R}^N} | (-\Delta)^{\frac{s}{2}} w|^2 \, dx.
\end{equation}
Testing \eqref{Eq5} with $\phi=w$, we have
\begin{equation}
\label{Eq8}
\int_{\mathbb{R}^N} | (-\Delta)^{\frac{s}{2}} w|^2 \, dx = \int_{\Omega} K(x)w^{1-q} \, dx + \int_{\Omega} \frac{f_n w}{(|v|+\frac{1}{n})^{\gamma}} \, dx + \int_{\Omega} w \mu_n \, dx.
\end{equation}
For the first term, invoking estimates \eqref{ESTIM1} and \eqref{ESTIM2}, we have
\begin{equation}
\label{Eq.8t}
\int_{\Omega} K(x)w^{1-q} \, dx = \int_{\Omega} \frac{K(x)w}{w^q} \, dx \leq C_1 \|w\|_{X_0^s(\Omega)}.
\end{equation}
For the second term on the right-hand side of \eqref{Eq8} we have the following estimate:
\begin{align}
\int_{\Omega} \dfrac{f_n}{(|v|+\frac{1}{n})^{\gamma}} w \, dx & \leq n^{\gamma+1} \int_{\Omega} w \, dx \leq C_2 \Big( \int_{\Omega} |w|^2 \, dx \Big)^{\frac{1}{2}},  \label{Eq9}
\end{align}
where in the last inequality we have used the H\"older inequality. Once more using the H\"older inequality gives $ \int_{\Omega} f_n w \, dx \leq C_3 \Big( \int_{\Omega} |w|^2 \, dx \Big)^{\frac{1}{2}}$ for some $C_3>0$. Thus combining this inequality with \eqref{Eq8}, \eqref{Eq.8t}, and \eqref{Eq9} we obtain
$$ 
\|w\|_{X_0^s(\Omega)}^2 \leq  C_1\|w\|_{X_0^s(\Omega)} + C_4 \Big( \int_{\Omega} |w|^2 \, dx \Big)^{\frac{1}{2}}.
$$
This inequality together with the embedding \eqref{Eq3.350} implies that $\|w\|_{X_0^s(\Omega)}=\|\Phi(v)\|_{X_0^s(\Omega)}$ is uniformly bounded in $v \in L^2(\Omega)$. Thus \eqref{Eq7} implies that $\Phi(L^2(\Omega))$ is contained in a ball of finite radius in $L^2(\Omega)$. Therefore this ball is invariant under $\Phi$. Again, by using the compactness of the embedding \eqref{Eq3.350} and because of the uniform boundedness of $\|\Phi(v)\|_{X_0^s(\Omega)}$, we get that $\Phi(L^2(\Omega))$ is relatively compact in $L^2(\Omega)$. This completes the proof of the compactness of $\Phi$.

\begin{proposition}
\label{Pro2}
Let $\{w_n\}$ be the solutions to \eqref{Eq4.75}. Then we have the following \textit{a priori} estimates.
\begin{enumerate}[leftmargin=*, label=(\roman*)]
\item
If $0< \gamma \leq 1$, then $\{T_k(w_n)\}_{n=1}^{\infty}$ is bounded in $X_0^s(\Omega)$.
\item
If $ \gamma > 1$, then $\{T_k^{\frac{\gamma+1}{2}}(w_n)\}_{n=1}^{\infty}$ is bounded in $X_{0}^s(\Omega)$ and $\{T_k(w_n)\}_{n=1}^{\infty}$ is bounded in $X_{\mathrm{loc}}^s(\Omega)$.
\end{enumerate}

\end{proposition}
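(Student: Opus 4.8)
The plan is to extract both sets of estimates by testing the regularised problem \eqref{Eq4.75} with suitable powers of truncations of $w_n$, after first securing a lower bound for $w_n$ that is \emph{uniform in} $n$. Since $f_n,\mu_n\ge 0$, each $w_n$ is a supersolution of the purely singular problem $(-\Delta)^s\underline u=K(x)\underline u^{-q}$, whose solution $\underline u\in\mathcal C_q$ is furnished by Proposition \ref{Pro1}; the comparison principle for the monotone operator $w\mapsto(-\Delta)^sw-K(x)w^{-q}$ then gives $w_n\ge\underline u\ge c\,\delta^{s}$ in case (a) and $w_n\ge c\,\delta^{\frac{2s-\beta}{q+1}}$ in case (b), with $c>0$ independent of $n$. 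This is the point that lets the weighted estimates \eqref{ESTIM1}--\eqref{ESTIM2} be applied with $n$-independent constants and keeps all the compositions away from the singularity of $t\mapsto t^{-q}$.

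For part (i), with $0<\gamma\le1$, I would test the weak formulation of \eqref{Eq4.75} with $\phi=T_k(w_n)\in X_0^s(\Omega)\cap L^\infty(\Omega)$. The elementary pointwise inequality $(a-b)\big(T_k(a)-T_k(b)\big)\ge\big(T_k(a)-T_k(b)\big)^2$, integrated against the Gagliardo kernel, gives coercivity in the form $\|T_k(w_n)\|_{X_0^s(\Omega)}^2\le C\int_{\mathbb R^N}(-\Delta)^{\frac s2}w_n\,(-\Delta)^{\frac s2}T_k(w_n)\,dx$, so it remains to bound the three terms on the right. For the first, the uniform lower bound together with \eqref{ESTIM1}--\eqref{ESTIM2} applied to $\phi=T_k(w_n)\ge0$ yields $\int_\Omega K w_n^{-q}T_k(w_n)\,dx\le C\|T_k(w_n)\|_{X_0^s(\Omega)}$. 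For the second, splitting over $\{w_n\le k\}$ and $\{w_n>k\}$ shows $T_k(w_n)\big(w_n+\tfrac1n\big)^{-\gamma}\le(k+1)^{1-\gamma}$ pointwise (here $\gamma\le1$ is decisive), so the term is $\le C_k\|f\|_{L^1(\Omega)}$ with \emph{no} weight on $f$; the third is $\le k\|\mu\|_{L^1(\Omega)}$. Collecting these gives $\|T_k(w_n)\|_{X_0^s(\Omega)}^2\le C\|T_k(w_n)\|_{X_0^s(\Omega)}+C(k)$, a quadratic inequality that forces the asserted uniform bound.

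For part (ii), with $\gamma>1$, the strongly singular term must instead be absorbed by the solution itself, so I would test with the power $\phi=T_k(w_n)^{\gamma}\in X_0^s(\Omega)\cap L^\infty(\Omega)$. Writing $\Psi(t)=T_k(t)^{\gamma}$ and $\Phi$ with $\Phi'=\sqrt{\Psi'}$, so that $\Phi(t)=c\,T_k(t)^{\frac{\gamma+1}2}$, the Stroock--Varopoulos type inequality $(a-b)(\Psi(a)-\Psi(b))\ge(\Phi(a)-\Phi(b))^2$ (a consequence of Cauchy--Schwarz) yields after integration $\|T_k^{\frac{\gamma+1}2}(w_n)\|_{X_0^s(\Omega)}^2\le C\int_{\mathbb R^N}(-\Delta)^{\frac s2}w_n\,(-\Delta)^{\frac s2}T_k(w_n)^{\gamma}\,dx$. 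The $f$-term is now controlled for free, since $f_n\big(w_n+\tfrac1n\big)^{-\gamma}T_k(w_n)^{\gamma}=f_n\big(T_k(w_n)/(w_n+\tfrac1n)\big)^{\gamma}\le f_n$, hence $\le\|f\|_{L^1(\Omega)}$; the $\mu$-term is $\le k^{\gamma}\|\mu\|_{L^1(\Omega)}$. For the first term I would factor $T_k(w_n)^{\gamma}\le k^{\frac{\gamma-1}2}T_k(w_n)^{\frac{\gamma+1}2}$ and apply \eqref{ESTIM1}--\eqref{ESTIM2} to $\phi=T_k(w_n)^{\frac{\gamma+1}2}\ge0$, getting $\int_\Omega K w_n^{-q}T_k(w_n)^{\gamma}\,dx\le C k^{\frac{\gamma-1}2}\|T_k^{\frac{\gamma+1}2}(w_n)\|_{X_0^s(\Omega)}$; the quadratic inequality then delivers the uniform $X_0^s$ bound for $T_k^{\frac{\gamma+1}2}(w_n)$. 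The local bound for $T_k(w_n)$ follows from this: on any $\omega\Subset\Omega$ one has $\delta\ge d_\omega>0$, hence $w_n\ge\varepsilon>0$ uniformly in $n$, and writing $T_k(w_n)=G\big(T_k^{\frac{\gamma+1}2}(w_n)\big)$ with $G(\tau)=c\,\tau^{2/(\gamma+1)}$, the function $G$ is Lipschitz on the range of its argument over $\mathrm{supp}\,\phi$ (bounded below by $\varepsilon^{\frac{\gamma+1}2}$); splitting $\phi G(v_n)(x)-\phi G(v_n)(y)$ into $\phi(x)\big(G(v_n(x))-G(v_n(y))\big)+G(v_n(y))\big(\phi(x)-\phi(y)\big)$ and estimating the two pieces by the Lipschitz bound for $G$ and the smoothness of $\phi$ gives $\|\phi T_k(w_n)\|_{X_0^s(\Omega)}\le C(\phi,k)$ uniformly in $n$.

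The crux throughout is the uniform lower bound on $w_n$, and the most delicate point is the $Ku^{-q}$ term in case (ii): the crude estimate $T_k(w_n)^{\gamma}\le k^{\gamma}$ would force an extra integrability condition on $\delta^{-\beta-q\sigma}$ that is not guaranteed by $2\beta+q(2s-1)<(2s+1)$ alone, so the factorisation $T_k(w_n)^{\gamma}\le k^{\frac{\gamma-1}2}T_k(w_n)^{\frac{\gamma+1}2}$, which reduces this term exactly to the quantity already being estimated, is what makes the closing argument work.
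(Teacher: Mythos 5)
Your proof is correct and follows essentially the same route as the paper: testing with $T_k(w_n)$ (resp.\ $T_k^{\gamma}(w_n)$), the same coercivity inequalities (the paper's \eqref{EqElementary} is your Stroock--Varopoulos step), the same factorisations $T_k(w_n)\le k^{1-\gamma}T_k^{\gamma}(w_n)$ and $T_k^{\gamma}(w_n)\le k^{\frac{\gamma-1}{2}}T_k^{\frac{\gamma+1}{2}}(w_n)$ for the $f$- and $K$-terms, and a uniform local lower bound to convert the $X_0^s$ bound on $T_k^{\frac{\gamma+1}{2}}(w_n)$ into the $X^s_{\mathrm{loc}}$ bound on $T_k(w_n)$. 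The only cosmetic difference is that you derive the uniform positivity on compacta by comparison with the purely singular problem, whereas the paper compares with $w_1$ via the strong maximum principle (see \eqref{STRICTP}).
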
 
\begin{proof}
We will follow the proof of \cite[Theorem 5.2]{MR3479207}. 
\hfil \newline
\textbf{First case: boundedness of $\{T_k(w_n)\}_{n=1}^{\infty}$ in $X_0^s(\Omega)$ for $0<\gamma \leq 1$.}

Testing \eqref{Eq4.75} with $\phi=T_k(w_n)$ and invoking \cite[Proposition 3]{MR3393266} we get 
\begin{equation}
\label{EqF10}
\int_{\mathbb{R}^N} |(-\Delta )^{\frac{s}{2}} T_k(w_n)|^2 \, dx \leq \int_{\Omega} \dfrac{K(x)}{w_n^q} T_k(w_n) \, dx + \int_{\Omega} \frac{f_n(x)}{(w_n+\frac{1}{n})^{\gamma}} T_k(w_n) \, dx + \int_{\Omega}T_k(w_n) \mu_n \, dx. 
\end{equation}
For the first term on the right-hand side of \eqref{EqF10}, we obtain the following estimate thanks to \eqref{ESTIM1} and \eqref{ESTIM2}.
\begin{align}
\int_{\Omega} \dfrac{K(x)}{w_n^q} T_k(w_n) \, dx \leq C_1 \|T_k(w_n)\|_{X_0^s(\Omega)}. \label{EqF13}
\end{align}
For the second term we have the following estimate:
\begin{equation}
\label{EqF13.5}
\int_{\Omega} \frac{f_n(x)}{(w_n+\frac{1}{n})^{\gamma}} T_k(w_n) \, dx \leq \int_{\Omega} \frac{f_n(x)T_k^{\gamma}(w_n)}{(w_n+\frac{1}{n})^{\gamma}} T_k^{1-\gamma}(w_n) \, dx \leq k^{1-\gamma} \|f\|_{L^1(\Omega)} =  C_2, 
\end{equation}
by noting that $ \frac{T_k^{\gamma}(w_n)}{(w_n+\frac{1}{n})^{\gamma}} \leq \frac{w_n^{\gamma}}{(w_n+\frac{1}{n})^{\gamma}} \leq 1$. Also for the last term:
\begin{equation}
\label{EqF14}
\int_{\Omega}T_k(w_n) \mu_n \, dx \leq k \|\mu\|_{L^1(\Omega)} = C_3.
\end{equation}
Thus from \eqref{EqF10}, \eqref{EqF13}, \eqref{EqF13.5} and \eqref{EqF14} we obtain
\begin{equation*}
\begin{aligned}
\int_{\mathbb{R}^N} |(-\Delta )^{\frac{s}{2}} T_k(w_n)|^2 \, dx & \leq C_1 \|T_k(w_n)\|_{X_0^s(\Omega)}+C_4,
\end{aligned}
\end{equation*}
or equivalently $\|T_k(w_n)\|_{X_0^s(\Omega)}^2 \leq C_1 \|T_k(w_n)\|_{X_0^s(\Omega)}+C_4$, which implies the boundedness of $\{T_k(w_n)\}_{n=1}^{\infty}$ in $X_0^s(\Omega)$.

\hfil \newline
\textbf{Second case: $\{T_k^{\frac{\gamma+1}{2}}(w_n)\}_{n=1}^{\infty}$ is bounded in $X_{0}^s(\Omega)$ and $\{T_k(w_n)\}_{n=1}^{\infty}$ is bounded in $X_{\mathrm{loc}}^s(\Omega)$.}

Taking $\phi=T_k^{\gamma}(w_n)$ as a test function in \eqref{Eq4.75} we obtain
\begin{equation}
\label{Eq10}
\int_{\mathbb{R}^N} (-\Delta )^{\frac{s}{2}} w_n (-\Delta )^{\frac{s}{2}} T_k^{\gamma}(w_n) \, dx = \int_{\Omega} \dfrac{K(x)}{w_n^q} T_k^{\gamma}(w_n) \, dx + \int_{\Omega} \frac{f_n(x)}{(w_n+\frac{1}{n})^{\gamma}} T_k^{\gamma}(w_n) \, dx + \int_{\Omega}T_k^{\gamma}(w_n) \mu_n \, dx.
\end{equation}
For the left-hand side, by using \eqref{Eq3.349n} and the following elementary inequality
\begin{equation}
\label{EqElementary}
(s_1-s_2) \Big(s_1^{\gamma}-s_2^{\gamma} \Big) \geq \frac{4\gamma}{(\gamma+1)^2} \Bigg(s_1^{\frac{\gamma+1}{2}}-s_2^{\frac{\gamma+1}{2}} \Bigg)^2, \qquad \forall s_1,s_2 \geq 0, \,\, \gamma>0,
\end{equation}
we get
\begin{align} \nonumber 
\int_{\mathbb{R}^N} (-\Delta )^{\frac{s}{2}} w_n (-\Delta )^{\frac{s}{2}} T_k^{\gamma}(w_n) \, dx  & =  \frac{C_{N,s}}{2} \iint_{D_{\Omega}} \frac{\Big(w_n(x)-w_n(y) \Big) \Big(T_k^{\gamma}(w_n)(x)-T_k^{\gamma}(w_n)(y) \Big)}{|x-y|^{N+2s}} \, dxdy \\\nonumber 
& \geq \frac{2 \gamma C_{N,s}}{(\gamma+1)^2} \iint_{D_{\Omega}} \frac{|T_k^{\frac{\gamma+1}{2}}w_n(x)-T_k^{\frac{\gamma+1}{2}}(w_n)(y)|^2}{|x-y|^{N+2s}} \, dxdy \\
& \geq C_5 \int_{\mathbb{R}^N} |(-\Delta )^{\frac{s}{2}} T_k^{\frac{\gamma+1}{2}}(w_n)|^2 \, dx. \label{Eq10.0001}
\end{align}
For the first term on the right-hand side of \eqref{Eq10}, we have the following estimate thanks to \eqref{ESTIM1} and \eqref{ESTIM2}.
\begin{align} \nonumber
\int_{\Omega} \dfrac{K(x)}{w_n^q} T_k^{\gamma}(w_n) \, dx & \leq k^{\frac{\gamma-1}{2}} \int_{\Omega} \dfrac{K(x) T_k^{\frac{\gamma+1}{2}}(w_n)}{w_n^q} \, dx \\
& \leq C_6 \|T_k^{\frac{\gamma+1}{2}}(w_n)\|_{X_0^s(\Omega)}. \label{Eq13}
\end{align}
Also for the other terms, we have
\begin{align} \label{Eq13.05} 
& \int_{\Omega} \frac{f_n(x)}{(w_n+\frac{1}{n})^{\gamma}} T_k^{\gamma}(w_n) \, dx \leq \int_{\Omega} \frac{f_n(x) w_n^{\gamma} }{(w_n+\frac{1}{n})^{\gamma}} \, dx \leq \|f\|_{L^1(\Omega)} =  C_7, \\ 
& \int_{\Omega}T_k^{\gamma}(w_n) \mu_n \, dx \leq k^\gamma \|\mu\|_{L^1(\Omega)} = C_8. \label{Eq14}
\end{align}
Thus from \eqref{Eq10}, \eqref{Eq10.0001}, \eqref{Eq13}, \eqref{Eq13.05}, and \eqref{Eq14} we obtain
\begin{equation*}
\label{Eq15}
\begin{aligned}
\int_{\mathbb{R}^N} |(-\Delta )^{\frac{s}{2}} T_k^{\frac{\gamma+1}{2}}(w_n)|^2 \, dx & \leq C_9 \|T_k^{\frac{\gamma+1}{2}}(w_n)\|_{X_0^s(\Omega)}+C_{10},
\end{aligned}
\end{equation*}
which implies the boundedness of $\{T_k^{\frac{\gamma+1}{2}}(w_n)\}_{n=1}^{\infty}$ in $X_0^s(\Omega)$.

Now we show that $\{T_k(w_n)\}_{n=1}^{\infty}$ is bounded in $X_{\mathrm{loc}}^s(\Omega)$. For this purpose first note that, the strong maximum principle provides that, for any compact set $K \Subset \Omega$, there exists $C(K)>0$ such that
\begin{equation}
\label{STRICTP}
w_n(x) \geq w_1(x) \geq C(K) >0, \qquad \text{a.e. in} \,\, K.
\end{equation}
Therefore
$$ T_k(w_n) \geq T_k(w_1) \geq \tilde{C}:=\min\{k, C(K) \}.$$
For $(x,y) \in K \times K$, define $\alpha_n:= \dfrac{T_k(w_n)(x)}{\tilde{C}}$, and $\beta_n:= \dfrac{T_k(w_n)(y)}{\tilde{C}}$.
Since $\alpha_n, \beta_n \geq 1$, we have the following estimate by applying an elementary inequality:
$$ (\alpha_n - \beta_n)^2 \leq \Bigg(\alpha_n^{\frac{\gamma+1}{2}}-\beta_n^{\frac{\gamma+1}{2}} \Bigg)^2. $$
Now by the definition of $\alpha_n$ and $\beta_n$, we obtain
$$ \Big(T_k(w_n(x)) - T_k(w_n(y)) \Big)^2 \leq \tilde{C}^{1-\gamma} \Bigg(T_k^{\frac{\gamma+1}{2}}w_n(x)-T_k^{\frac{\gamma+1}{2}}w_n(y) \Bigg)^2.$$
Thus we get the boundedness of $\{T_k(w_n)\}_{n=1}^{\infty}$ in $X_{\mathrm{loc}}^s(\Omega)$ by the boundedness of $\{T_k^{\frac{\gamma+1}{2}}(w_n)\}_{n=1}^{\infty}$ in $X_0^s(\Omega)$.
\end{proof}

Now, we have the following Proposition in the spirit of \cite[Theorem 23]{MR3393266} and \cite[Theorem 4.10]{MR3479207}.
\begin{proposition}
\label{Pro3}
\begin{enumerate}[leftmargin=*, label=(\roman*)]
\item
If $0<\gamma \leq 1$, then $\{w_n\}_{n=1}^{\infty}$ is bounded in $X_0^{s_1,p}(\Omega)$, for all $s_1<s$ and for all $p < \frac{N}{N-s}$.
\item
If $ \gamma> 1$, then $\{w_n\}_{n=1}^{\infty}$ is bounded in $X_{\mathrm{loc}}^{s_1,p}(\Omega)$, for all $s_1<s$ and for all $p < \frac{N}{N-s}$.
\end{enumerate}
\end{proposition}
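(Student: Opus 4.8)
The plan is to upgrade the truncation estimates of Proposition~\ref{Pro2} into a uniform bound on the full sequence $\{w_n\}$, adapting the scheme of \cite[Theorem~23]{MR3393266} and \cite[Theorem~4.10]{MR3479207}. The first step is to extract from the proof of Proposition~\ref{Pro2} the explicit dependence on the truncation level $k$: inspecting \eqref{EqF13.5} and \eqref{EqF14} shows that in case (i) one has $\|T_k(w_n)\|_{X_0^s(\Omega)}^2 \leq C(1+k)$, while the computation behind case (ii) gives $\|T_k^{\frac{\gamma+1}{2}}(w_n)\|_{X_0^s(\Omega)}^2 \leq C(1+k^{\gamma})$ globally, together with $\|T_k(w_n)\|_{X^s(K)}^2 \leq C(K)(1+k^{\gamma})$ on every $K \Subset \Omega$, all uniformly in $n$.

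Next I would derive a Marcinkiewicz (level-set) estimate. Using the Sobolev embedding \eqref{Eq3.350} with $p=2$, namely $X_0^s(\Omega) \hookrightarrow L^{2_s^*}(\Omega)$ with $2_s^*=\frac{2N}{N-2s}$, together with Chebyshev's inequality on $\{w_n>k\}$, the bound $\|T_k(w_n)\|_{X_0^s(\Omega)}^2 \leq C(1+k)$ yields the power-law decay $|\{w_n>k\}| \leq C\,k^{-\frac{N}{N-2s}}$ for $k\geq 1$, uniformly in $n$; the same decay follows in case (ii) after writing $T_k^{\frac{\gamma+1}{2}}(w_n)=T_{k^{(\gamma+1)/2}}(w_n^{\frac{\gamma+1}{2}})$ and invoking the global estimate for $T_k^{\frac{\gamma+1}{2}}(w_n)$. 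In particular $\{w_n\}$ is bounded in the Marcinkiewicz space $M^{\frac{N}{N-2s}}(\Omega)$, hence in $L^p(\Omega)$ for every $p<\frac{N}{N-2s}$. Since $\frac{N}{N-s}<\frac{N}{N-2s}$, the integrability of $w_n$ itself is never the binding restriction.

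The core of the argument is the fractional Boccardo--Gallou\"et estimate for the Gagliardo seminorm, which I expect to be the main obstacle. I would bound $\|w_n\|_{X_0^{s_1,p}(\Omega)}^p$ by splitting the double integral over $D_\Omega$ according to the dyadic level sets $\{j \leq w_n < j+1\}$, applying H\"older's inequality with exponent $\frac{2}{p}>1$ on each piece so as to bring in the energy of the increments $T_{j+1}(w_n)-T_j(w_n)$ controlled in Step~1, and then summing the resulting series; the level-set decay of Step~2 is exactly what forces convergence, and precisely in the range $s_1<s$ and $p<\frac{N}{N-s}$. The genuine difficulty, absent in the local case, is that the seminorm is a nonlocal double integral, so the contribution of pairs $(x,y)$ lying in far-apart level sets does not vanish and must be estimated separately, using the Marcinkiewicz bound on $w_n$ to control these off-diagonal interactions. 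This is the technical heart carried out in \cite{MR3393266, MR3479207}, which I would transcribe to the present situation; part (i) then follows.

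Finally, for part (ii) I would localize the entire scheme, which is forced by the fact that in this case only $T_k^{\frac{\gamma+1}{2}}(w_n)$ is globally bounded in $X_0^s(\Omega)$, whereas the energy of $T_k(w_n)$ itself is controlled only on compact subsets. Fixing $K \Subset \Omega$, the strict positivity $w_n \geq w_1 \geq C(K)>0$ from \eqref{STRICTP}, together with the local bound $\|T_k(w_n)\|_{X^s(K)}^2 \leq C(K)(1+k^{\gamma})$, allows me to run Steps~2 and~3 with the Gagliardo seminorm restricted to $K$, producing a bound for $\{w_n\}$ that is uniform in $n$ on $K$. Since $K\Subset\Omega$ is arbitrary, this gives boundedness of $\{w_n\}$ in $X_{\mathrm{loc}}^{s_1,p}(\Omega)$, as claimed.
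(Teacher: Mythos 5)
Your outline takes essentially the same route as the paper, whose proof of this proposition consists of deferring to \cite[Proposition 3.4]{C3} and \cite[Lemma 2.8]{MR3935063}: those references carry out precisely the fractional Boccardo--Gallou\"et scheme you describe ($k$-explicit truncation energies from Proposition \ref{Pro2}, Marcinkiewicz decay $|\{w_n>k\}|\leq Ck^{-\frac{N}{N-2s}}$ via the embedding \eqref{Eq3.350}, dyadic splitting of the Gagliardo seminorm with a separate estimate for the far-apart level-set pairs, and localization via \eqref{STRICTP} when $\gamma>1$). Like the paper, you leave the genuinely nonlocal step---the off-diagonal interaction estimate, which is where the restrictions $s_1<s$ and $p<\frac{N}{N-s}$ actually arise---to the cited literature rather than writing it out, so the two arguments are at the same level of completeness.
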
 
\begin{proof}

\hfil \newline
\textbf{First case: $0< \gamma \leq 1$.}

By following the proof of \cite[Proposition 3.4]{bayrami2020existence} we obtain that $\{ (-\Delta )^{\frac{s}{2}} w_n \}$ is bounded in $M^{\frac{N}{N-s}}(\Omega)$ and the embedding \eqref{Eq3.2} implies the boundedness of $\{ (-\Delta )^{\frac{s}{2}} w_n\}$ in $L^p(\Omega)$, for all $p < \frac{N}{N-s}$. Notice that since we have $N>2s$, therefore $\frac{N}{N-s} <2$. Now, by invoking Theorem 5 and Proposition 10 in chapter 5 of the reference book \cite{MR0290095}, we get the boundedness of $\{w_n\}$ in $X_0^{s_1,p}(\Omega)$, for all $s_1<s$ and for all $p<\frac{N}{N-s}$. (In \cite{MR0290095}, our function space $X_0^{s,p}(\Omega)$, reads as $\Lambda_{s}^{p,p}(\mathbb{R}^N)$, and $\mathcal{L}_{s}^p(\mathbb{R}^N)$ denotes the space of Bessel potentials, see \cite[subsection 3.2]{MR0290095}.)

\hfil \newline
\textbf{Second case: $ \gamma > 1$.}

By following the proof of \cite[Lemma 2.8]{MR3935063} and the similar reasoning as in the first case, i.e. $0< \gamma \leq 1$, the proof obtains.
\end{proof}
Now we are ready to prove the Theorem \ref{Thm1}.

\begin{proof}[Proof of Theorem \ref{Thm1}]
By Proposition \ref{Pro3} we deduce that there exists a function $u$ such that
\begin{itemize} 
\item
if $0<\gamma \leq 1$, then $u \in X_0^{s_1,p}(\Omega)$, for all $s_1<s$ and for all $p < \frac{N}{N-s}$, and up to a subsequence, $ w_n \to u$ weakly in $X_{0}^{s_1,p}(\Omega)$. Moreover, $T_k(u) \in X_0^s(\Omega)$.
\item
if $ \gamma > 1$, then $u \in X_{\mathrm{loc}}^{s_1,p}(\Omega)$, for all $s_1<s$ and for all $p < \frac{N}{N-s}$, and up to a subsequence, $ w_n \to u$ weakly in $X_{\mathrm{loc}}^{s_1,p}(\Omega)$. Moreover, $T_k^{\frac{\gamma+1}{2}}(u) \in X_0^s(\Omega)$, and $T_k(u) \in X_{\mathrm{loc}}^s(\Omega)$.
\end{itemize}
Besides, by using the embedding \eqref{Eq3.350}, up to a subsequence, we can assume that
\begin{itemize} 
\item
$ w_n \to u $ in $L^r(\Omega)$, for any $r \in [1,2_{s}^*)$, in the case $0<\gamma \leq 1$;
\item
$ w_n \to u $ in $L_{\mathrm{loc}}^r(\Omega)$, for any $r \in [1,2_{s}^*)$, in the case $\gamma> 1$;
\item
$w_n(x) \to u(x)$ pointwise a.e. in $\Omega$.
\end{itemize} 
Now for every fixed $\phi \in \mathcal{T}(\Omega)$, by noting that $\phi$ has compact support and using \eqref{STRICTP}, we could pass to the limit and obtain
$$
\begin{aligned}
& \lim_{n \to \infty} \int_{\Omega} \dfrac{K(x)}{w_n^q} \phi \, dx = \int_{\Omega} \dfrac{K(x)}{u^q} \phi \, dx,\\
& \lim_{n \to \infty} \int_{\Omega} \dfrac{f_n(x)}{(w_n+\frac{1}{n})^{\gamma}} \phi \, dx = \int_{\Omega} \dfrac{f(x)}{u^{\gamma}} \phi \, dx,\\
& \lim_{n \to \infty} \int_{\mathbb{R}^N} (-\Delta )^{\frac{s}{2}} w_n (-\Delta )^{\frac{s}{2}} \phi \, dx = \lim_{n \to \infty} \int_{\mathbb{R}^N} w_n (-\Delta)^{s} \phi \, dx = \int_{\mathbb{R}^N} u (-\Delta)^{s} \phi \, dx.
\end{aligned}
$$
Also trivially we have
$$ \int_{\Omega} \mu_n \phi \, dx \to \int_{\Omega} \mu \phi \, dx, \qquad \forall \phi \in \mathcal{T}(\Omega).$$
Since for every $K \Subset \Omega$, there exists $C_K>0$ such that $w_n(x) \geq w_1(x) \geq C_K$ a.e. in $K$ and also $w_n \equiv 0$ in $\big(\mathbb{R}^N \setminus \Omega \big)$ and because of $w_n(x) \to u(x)$ a.e. in $\Omega$, thus $u$ is a weak solution to problem \eqref{Eq1}. This completes the proof of the existence result. 

As mentioned before, for uniqueness in the case $0<\gamma \leq 1$, we would like to consider the entropy solution.  

First we prove the existence of an entropy solution. Let $0<\gamma \leq 1$ and consider the following approximation problem.
\begin{equation}
\label{Eq30}
\begin{cases}
(- \Delta)^s u_n = K(x)u_n^{-q}+ f_n(x) \big(u_n+\frac{1}{n} \big)^{-\gamma} + \mu_n &   \mathrm{in} \,\, \Omega,\\ u_n>0 &   \mathrm{in} \,\, \Omega, \\ u_n=0 &   \mathrm{in} \,\,  \big(\mathbb{R}^N \setminus \Omega \big).
\end{cases}
\end{equation}
For each fixed $k$ and each fixed $\phi \in X_0^s(\Omega) \cap L^{\infty}(\Omega)$, let consider the sequence $\{ T_k(u_n-\phi)\}_{n=1}^{\infty}$, which is a bounded sequence in $X_0^s(\Omega)$, by Proposition \ref{Pro2}. Therefore, up to a subsequence, $T_k(u_n-\phi) \to T_k(u-\phi)$ weakly in $X_0^s(\Omega)$, as $n \to \infty$, where $u$ is the weak solution to \eqref{Eq1}. Also, $\{ T_k(u_n-\phi)\}_{n=1}^{\infty}$ is an increasing sequence of non-negative functions by the strict monotonicity of the operator $(- \Delta)^s u - K(x) u^{-q}$ (a similar argument as in \cite[Lemmas 3.1]{MR3479207}) and the increasing behaviour of $f_n(x) \big(u_n+\frac{1}{n} \big)^{-\gamma} + \mu_n$. Once more the strict monotonicity of $(-\Delta)^s $, and $T_k(u_n-\phi) \uparrow T_k(u-\phi)$ implies that $ T_k(u_n-\phi) \to T_k(u-\phi)$ strongly in $X_0^s(\Omega)$ (see for example \cite[Lemmas 2.18]{MR3479207} for this compactness result). On the other hand, by using the estimates \eqref{ESTIM1}, \eqref{ESTIM2}, and \cite[Proposition 4]{MR3393266} we have
$$ 
\begin{aligned}
\int_{\Omega} \frac{K(x) |T_k(u_n-\phi) |}{u_1^{q}} \, dx & \leq C \Big\| |T_k(u_n-\phi) |\Big\|_{X_0^s(\Omega)} \\
& \leq C \|T_k(u_n-\phi)\|_{X_0^s(\Omega)}  \\
& \leq C_1<+ \infty, \qquad \text{uniformly in} \,\, n.
\end{aligned}
$$
Now Fatou's lemma and the monotonicity of $u_n$ gives
$$ \Bigg| \frac{K(x)T_k(u_n-\phi)}{u_n^{q}}\Bigg| \leq \frac{K(x) |T_k(u-\phi)|}{u_1^{q}} \in L^1(\Omega). $$
Also, we have
$$ \Bigg| \frac{f_n(x)T_k(u_n-\phi)}{(u_n+\frac{1}{n})^{\gamma}} \Bigg| \leq k \frac{f}{u_1^{\gamma}} \in L^1(\Omega), $$
because of \eqref{EqBNTBOU}, and the assumptions \eqref{EnASSUMPTIONS}. Finally, using $T_k(u_n-\phi)$ as a test function in \eqref{Eq30}, and passing to the limit we find an entropy solution even with the equalities instead of the inequalities in Definition \ref{DEFN}, i.e. \eqref{Eq3.500}.

Let $u$ and $v$ be two entropy solutions. Testing $u$ with $\phi=T_h(v)$ and $v$ with $\phi=T_h(u)$ in the weak formulation of entropy inequalities and following the idea of \cite[Section 5]{MR1354907}, the rest of the proof of uniqueness is almost immediate. More precisely, we have 
\begin{equation}
\label{ENTEq20}
\begin{aligned}
\int_{\{|u-T_h(v)|<k\}} (-\Delta)^{\frac{s}{2}} u (-\Delta)^{\frac{s}{2}} (u-T_h(v)) \, dx \leq & \int_{\Omega} \frac{K(x) T_k(u-T_h(v))}{u^{q}} \, dx +\int_{\Omega} \frac{f T_k(u-T_h(v))}{u^{\gamma}} \, dx \\
& \quad + \int_{\Omega} \mu T_k(u-T_h(v)) \, dx,
\end{aligned}
\end{equation}
and
\begin{equation}
\label{ENTEq21}
\begin{aligned}
\int_{\{|v-T_h(u)|<k\}} (-\Delta)^{\frac{s}{2}} v (-\Delta)^{\frac{s}{2}} (v-T_h(u)) \, dx \leq & \int_{\Omega} \frac{K(x) T_k(v-T_h(u))}{v^{q}} \, dx +\int_{\Omega} \frac{f T_k(v-T_h(u))}{v^{\gamma}} \, dx \\
& \quad + \int_{\Omega} \mu T_k(v-T_h(u)) \, dx.
\end{aligned}
\end{equation}
Adding up the left-hand sides of \eqref{ENTEq20} and \eqref{ENTEq21} and restricting them to 
$$ A_0^h=\{ x \in \Omega \,:\, |u-v|<k, \, |u|<h, \, |v|<h \}, $$
we have the following obvious estimate
\begin{equation}
\label{ENTEq22}
\int_{A_0^h} |(-\Delta)^{\frac{s}{2}} (u-v)|^2 \, dx \geq 0.
\end{equation}
Also, summing the right-hand sides of \eqref{ENTEq20} and \eqref{ENTEq21} when restricted to $A^h_0$ gives 
\begin{equation}
\label{ENTEq23}
\int_{A_0^h} K(x)(u-v)(u^{-q}-v^{-q}) \, dx + \int_{A_0^h} f(u-v)(u^{-\gamma}-v^{-\gamma}) \, dx \leq 0.
\end{equation}
Now, consider the set $ A_1^h=\{ x \in \Omega \,:\, |u-T_h(v)|<k, \, |v| \geq h \}$. 
When restricted to $A_1^h$, we have the following for the left-hand side of \eqref{ENTEq20}:
\begin{equation}
\label{ENTEq24}
\int_{A_1^h} |(-\Delta)^{\frac{s}{2}} u|^2 \, dx \geq 0.
\end{equation}
On the other hand, when restricted to $A_1^h$, the right-hand side of \eqref{ENTEq20} is
\begin{equation}
\label{ENTEq24+++}
\int_{A_1^h} K(x)u^{-q} (u-h) \, dx +\int_{A_1^h} f u^{-{\gamma}} (u-h) \, dx+ \int_{A_1^h} \mu (u-h) \, dx,
\end{equation}
which goes to zero as $h \to \infty$.

Finally on the remaining set $ A_2^h=\{ x \in \Omega \,:\, |u-T_h(v)|<k, \, |v| < h, \, |u| \geq h \}$, the left-hand side of \eqref{ENTEq20} is as follows
\begin{equation}
\label{ENTEq25}
\int_{A_2^h} (-\Delta)^{\frac{s}{2}} u (-\Delta)^{\frac{s}{2}} (u-v) \, dx,
\end{equation}
which goes to zero as $h \to \infty$.

The right-hand side of \eqref{ENTEq20}, when restricted to $A_2^h$, is as follows
\begin{equation}
\label{ENTEq25++}
\begin{aligned}
\int_{A_2^h} K(x)u^{-q} (u-v) \, dx + \int_{A_2^h} fu^{-\gamma} (u-v) \, dx + \int_{A_2^h} \mu (u-v) \, dx,
\end{aligned}
\end{equation}
which also goes to zero as $h \to \infty$.

Similarly, we can estimate the left-hand side of \eqref{ENTEq21} on the sets $B_1^h=\{x \in \Omega \,:\, |v-T_h(u)|<k, \, |u| \geq h \}$ and $B_2^h=\{  x \in \Omega \,:\, |v-T_h(u)|<k, \, |u| < h, \, |v| \geq h  \}$ and find that
\begin{equation}
\label{ENTEq26}
\int_{B_1^h} |(-\Delta)^{\frac{s}{2}} v|^2 \, dx \geq 0,
\end{equation}
and
\begin{equation}
\label{ENTEq27}
\int_{B_2^h} (-\Delta)^{\frac{s}{2}} v (-\Delta)^{\frac{s}{2}} (v-u) \, dx \to 0, \qquad \text{as} \,\, h \to 0.
\end{equation}

On the other hand for the right-hand side of \eqref{ENTEq21} on the sets $B_1^h=\{x \in \Omega \,:\, |v-T_h(u)|<k, \, |u| \geq h \}$ and $B_2^h=\{  x \in \Omega \,:\, |v-T_h(u)|<k, \, |u| < h, \, |v| \geq h  \}$, we have: 
\begin{equation}
\label{ENTEq26++}
\int_{B_1^h} K(x)v^{-q} (v-h) \, dx + \int_{B_1^h} fv^{-\gamma} (v-h) \, dx +\int_{B_1^h} \mu (v-h) \, dx \to 0, \quad \text{as} \,\, h \to 0,
\end{equation}
and
\begin{equation}
\label{ENTEq27++}
\int_{B_2^h} K(x)v^{-q} (v-u) \, dx + \int_{B_2^h} fv^{-\gamma} (v-u) \mu \, dx + \int_{B_2^h} \mu (v-u) \, dx \to 0, \quad \text{as} \,\, h \to 0.
\end{equation}
Putting all the estimates \eqref{ENTEq22}, \eqref{ENTEq23}, \eqref{ENTEq24}, \eqref{ENTEq24+++}, \eqref{ENTEq25}, \eqref{ENTEq25++}, \eqref{ENTEq26}, \eqref{ENTEq27}, \eqref{ENTEq26++}, and \eqref{ENTEq27++} together we obtain
$$ \int_{A_0^h} |(-\Delta)^{\frac{s}{2}} (u-v)|^2 \, dx \leq \mathrm{o}(h), \qquad \text{as} \,\, h \to 0. $$
Now, since $A_0^h$ goes to $\{ |u-v|<k \}$, as $h \to 0$ we have
$$ \int_{\{|u-v|<k \}} |(-\Delta)^{\frac{s}{2}} (u-v)|^2 \, dx \leq 0, \qquad \forall k. $$
Therefore $u \equiv v$, and the uniqueness is proved.
\end{proof}

\begin{remark}
\label{REM1}
The above proof for the existence of a positive weak solution can be generalized easily in the following ways. 
\begin{itemize} 
\item
$\mu$ can be a non-negative bounded Radon measure in $\Omega$. More precisely, in the approximating problem \eqref{Eq4.75}, instead of $\mu_n=T_n(\mu)$, we should consider $\mu_n$ as a sequence of non-negative $L^{\infty}(\Omega)$ functions bounded in $L^1(\Omega)$ such that converges to $\mu$ in the narrow topology of measures, i.e. 
$$
\int_{\Omega} \phi \mu_n \, dx \to \int_{\Omega} \phi  \, d\mu, \qquad \forall \phi \in C_c(\Omega). 
$$
Now the same proof can be performed.
\item
Also, instead of $f \in L^1(\Omega)$, we can consider $ f \in L^1(\Omega) + X^{-s}(\Omega)$, if the datum $\mu$ is regular enough to give the result that the solution $u$ obtained above satisfies $u \in X_{\mathrm{loc}}^s(\Omega)$. See Remark \ref{REMM01}. In this case, in the approximating problem \eqref{Eq4.75}, instead of $f_n=T_n(f)$, we should consider $f_n$ as a sequence of non-negative $X_0^s(\Omega) \cap L^{\infty}(\Omega)$ functions bounded in $L^1(\Omega)$ such that converges to $f$ in the following sense: 
$$
\int_{\Omega} \phi f_n \, dx \to \big\langle \phi, f \big\rangle_{X_0^s(\Omega) \cap L^{\infty}(\Omega), L^1(\Omega) + X^{-s}(\Omega)}, \qquad \forall \phi \in X_0^s(\Omega) \cap L^{\infty}(\Omega). 
$$
Here $\big\langle \cdot, \cdot \big\rangle_{X_0^s(\Omega) \cap L^{\infty}(\Omega), L^1(\Omega) + X^{-s}(\Omega)}$ denotes the duality pairing between $X_0^s(\Omega) \cap L^{\infty}(\Omega)$, and $L^1(\Omega) + X^{-s}(\Omega)$. Notice that $L^1(\Omega)+X^{-s}(\Omega)$ embeds in the dual space of $X_0^s(\Omega)\cap L^{\infty}(\Omega)$. Besides, see section \ref{Section4} for some relaxation on the assumption on $f$, in order to prove the existence of solutions.
\end{itemize}
\end{remark}

\begin{remark}
\label{RemEntropy}
It is important to note that for the existence of a positive entropy solution, our argument needs that the approximating terms $f_n$, and $\mu_n$ be the increasing ones. This is always possible by the usual truncation technique for $\mu \in L^1(\Omega)$. Generally, non-negative bounded Radon measures cannot be approximated by an increasing sequence of bounded functions, see \cite{MR2592976}. This increasing behaviour of the approximations is the key property in the proof of our existence result for the entropy solution. At the end of section 4, we will see that for the measure $\mu \in L^1(\Omega) + X^{-s}(\Omega)$, it is possible to approximate it by an increasing sequence of bounded functions.
\end{remark}

\begin{proof}[Proof of Theorem \ref{Thm2}]
\hfil \newline
\textbf{Uniqueness for the first case: $ \gamma > 1$.}

Let $u_1$ and $u_2$ be two positive weak solutions to problem \eqref{Eq1}. We know that $T_k^{\frac{\gamma+1}{2}}(u_i) \in X_0^s(\Omega)$, for $i=1,2$. Now define $w=u_1-u_2$. Then we have
\begin{equation}
\label{Uniquness1}
\int_{\mathbb{R}^N} w (-\Delta)^s \phi \, dx = \int_{\Omega} \Bigg(\frac{K(x)}{u_1^q}-\frac{K(x)}{u_2^q} \Bigg) \phi \, dx + \int_{\Omega} \Bigg(\frac{f(x)}{u_1^{\gamma}}-\frac{f(x)}{u_2^{\gamma}} \Bigg) \phi \, dx, \quad \forall \phi \in \mathcal{T}(\Omega).
\end{equation}
By using the comparison principle and thanks to the estimates \eqref{ESTIM1} and \eqref{ESTIM2} and also a density argument, the equality \eqref{Uniquness1} holds for all $\phi \in X_0^s(\Omega) \cap L^{\infty}(\Omega)$ if we further assume that the assumption \eqref{ASSUMPTIONS} holds. More precisely, it is enough to show that in the second term on the right-hand side of \eqref{Uniquness1}, any $\phi \in X_0^s(\Omega) \cap L^{\infty}(\Omega)$ can be used as a test function. By using the comparison principle, H\"older inequality, the estimates \eqref{ESTIM1} and \eqref{ESTIM2} and the assumptions \eqref{ASSUMPTIONS} on $f$ this follows quickly.

Using $T_k^{\frac{\gamma+1}{2}}(w^-)$ as a test function in \eqref{Uniquness1} and invoking \eqref{Eq3.349n} and \eqref{EqElementary} we deduce that
$$ 
\int_{\mathbb{R}^N} |(-\Delta)^{\frac{s}{2}} T_k^{\frac{\gamma+3}{4}}(w^-)|^2 \, dx \leq \frac{(\gamma+3)^2}{4(\gamma+1) C_{N,s}} \Bigg\{ \int_{\Omega} \Bigg(\frac{K(x)}{u_1^q}-\frac{K(x)}{u_2^q} \Bigg) T_k^{\frac{\gamma+1}{2}}(w^-) \, dx + \int_{\Omega} \Bigg(\frac{f(x)}{u_1^{\gamma}}-\frac{f(x)}{u_2^{\gamma}} \Bigg) T_k^{\frac{\gamma+1}{2}}(w^-) \, dx \Bigg \} \leq 0.
$$
Therefore, $T_k^{\frac{\gamma+3}{4}}(w^-) \equiv 0$. So we reach at the conclusion that $T_k^{\frac{\gamma+3}{4}}(u_1) \geq T_k^{\frac{\gamma+3}{4}}(u_2)$. Similar argument shows that $T_k^{\frac{\gamma+3}{4}}(u_1) \leq T_k^{\frac{\gamma+3}{4}}(u_2)$. Therefore $T_k^{\frac{\gamma+3}{4}}(u_1)=T_k^{\frac{\gamma+3}{4}}(u_2)$, for any $k$, and the uniqueness follows.

\hfil \newline
\textbf{Uniqueness for the second case: $ 0 < \gamma \leq 1$.}

Let $u_1$ and $u_2$ be two positive weak solutions to problem \eqref{Eq1} and define $w=u_1-u_2$. We know that $T_k(w) \in X_0^s(\Omega)$. Using $T_k(w^-)$ as a test function in \eqref{Uniquness1} we deduce that
$$
\int_{\mathbb{R}^N} |(-\Delta)^{\frac{s}{2}} T_k(w^-)|^2 \, dx \leq \int_{\Omega} \Bigg(\frac{K(x)}{u_1^q}-\frac{K(x)}{u_2^q} \Bigg) T_k(w^-) \, dx + \int_{\Omega} \Bigg(\frac{f(x)}{u_1^{\gamma}}-\frac{f(x)}{u_2^{\gamma}} \Bigg) T_k(w^-) \, dx \leq 0.
 $$
Therefore, $T_k(w^-) \equiv 0$. Similarly $T_k(w^+) \equiv 0$, for any $k$, and the uniqueness follows. Therefore the notion of the entropy solution and the weak solution coincides in this case.
\end{proof}

\section{Some relaxation assumptions on a datum to prove the existence of solutions}
\label{Section4}
This section includes approaches that are the same as the procedures used in \cite{MR2592976}. Specifically, the idea of the proofs is the same, while the techniques we deal with the fractional Laplacian instead of the Laplacian operator. We will relax the assumption on $f$ to obtain the existence results. Before continuing, we need to recall the concept of capacity (see \cite{MR3409135, MR3890060, MR2777530} for the classical notion of capacity associated with $W^{1,p}(\mathbb{R}^N)$). More precisely, what we need is the concept of fractional capacity, which will be defined below. See \cite{MR3306694, MR3518675} for more information. The capacity theory allows the study of small sets in $\mathbb{R}^N$. One can show that in $\mathbb{R}^N$, there are zero Lebesgue sets with capacity strictly bigger than zero. So, it makes sense to speak about the values of a function $u \in X^{s,2}(\Omega)$ on a set $E \subset \Omega$ with capacity bigger than zero and study its fine properties.

Let $K$ be a compact subset of $\Omega$. The $(s,2)$-capacity of the compact set $K$ defines as follows: 
$$ \mathrm{Cap}_{s,2}(K)=\inf \Bigg\{ \int_{\mathbb{R}^N} |(-\Delta)^{\frac{s}{2}} \Psi|^2 \, dx, \quad \Psi \in C^{\infty}_c(\Omega), \,\, \Psi \geq \chi_K \Bigg\}, $$
where $\chi_K$ represents the characteristic function of $K$. The above definition of capacity extends naturally to all Borel subsets of $\Omega$ by the regularity.

The following proposition illuminates our discussion of the relaxation theorem that will come later in Theorem \ref{Relaxation-Theorem}. See \cite{MR2989693} for a similar result in the case of the $p$-Laplacian operator with Hardy potential.
\begin{proposition}
\label{ReThm1}
Let $\nu$ be a non-negative Radon measure concentrated on a Borel set $E$ of zero $(s,2)$-capacity, and let $g_n$ be a sequence of non-negative $L^{\infty}(\Omega)$ functions that converges to $\nu$ in the narrow topology of measures. Let $u_n$ be the solution to
\begin{equation}
\label{ReEq1}
\begin{cases}
(-\Delta )^s u_n = \dfrac{g_n(x)}{(u_n+\frac{1}{n})^{\gamma}} & \mathrm{in} \,\, \Omega,\\ u_n>0 & \mathrm{in} \,\, \Omega, \\ u_n=0 & \mathrm{in} \,\,  \big(\mathbb{R}^N \setminus \Omega \big).
\end{cases}
\end{equation}
\begin{enumerate}[leftmargin=*, label=(\roman*)]
\item
If $ \gamma = 1$, then $u_n$ converges weakly to zero in $X_0^s(\Omega)$.
\item
If $ \gamma > 1$, then $u_n^{\frac{\gamma+1}{2}}$ converges weakly to zero in $X_0^s(\Omega)$.
\end{enumerate}
\end{proposition}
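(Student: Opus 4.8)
The plan is to mimic the approximation-plus-capacity argument of \cite{MR2592976}, transplanted to the nonlocal setting; the decisive structural fact I will exploit is that a nonnegative measure belonging to the dual space $X^{-s}(\Omega)$ cannot be concentrated on a set of zero $(s,2)$-capacity unless it vanishes identically. First I would establish uniform a priori bounds. In the case $\gamma=1$ I test \eqref{ReEq1} with $u_n$ itself; since $\tfrac{u_n}{u_n+1/n}\le 1$ and $\{g_n\}$ is bounded in $L^1(\Omega)$ (it converges narrowly to the finite measure $\nu$), this yields $\|u_n\|_{X_0^s(\Omega)}^2=\int_\Omega \tfrac{g_n u_n}{u_n+1/n}\,dx\le \|g_n\|_{L^1(\Omega)}\le C$, so up to a subsequence $u_n\rightharpoonup u$ in $X_0^s(\Omega)$ and $u_n\to u$ in $L^2(\Omega)$ and a.e. In the case $\gamma>1$ I instead test with (truncations of) $u_n^\gamma$ and invoke the elementary inequality \eqref{EqElementary} exactly as in Proposition \ref{Pro2}; because $\tfrac{u_n^\gamma}{(u_n+1/n)^\gamma}\le 1$ the right-hand side is again controlled by $\|g_n\|_{L^1(\Omega)}$, whence $\{u_n^{\frac{\gamma+1}{2}}\}$ is bounded in $X_0^s(\Omega)$ and $u_n^{\frac{\gamma+1}{2}}\rightharpoonup v$. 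In both cases the source $h_n:=g_n\big(u_n+\tfrac1n\big)^{-\gamma}=(-\Delta)^s u_n$ is bounded in $X^{-s}(\Omega)$, so, being a weak limit of nonnegative functions, it converges along a subsequence to a nonnegative measure $\lambda\in X^{-s}(\Omega)$ with $(-\Delta)^s u=\lambda$.

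The heart of the matter is the capacity cut-off. Since $\mathrm{Cap}_{s,2}(E)=0$, I choose $\psi_j\in C_c^\infty(\Omega)$ with $0\le\psi_j\le 1$, $\psi_j\equiv 1$ on a neighborhood of $E$, $\psi_j\ge \chi_E$, and $\eta_j:=\|\psi_j\|_{X_0^s(\Omega)}\to 0$ (one may also arrange $\psi_j\to 0$ a.e.). Testing \eqref{ReEq1} with $\psi_j$ and using \eqref{Eq3.349n} together with the Cauchy--Schwarz inequality in the Gagliardo form gives, in the case $\gamma=1$,
\[
\int_\Omega \frac{g_n\,\psi_j}{u_n+\tfrac1n}\,dx=\int_{\mathbb{R}^N}(-\Delta)^{\frac s2}u_n\,(-\Delta)^{\frac s2}\psi_j\,dx\le \|u_n\|_{X_0^s(\Omega)}\,\eta_j\le C\eta_j,
\]
uniformly in $n$ (for $\gamma>1$ the analogous bound is obtained after passing to $u_n^{\frac{\gamma+1}{2}}$ and using \eqref{EqElementary}). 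Letting $n\to\infty$, the left-hand side pairs $h_n\rightharpoonup\lambda$ in $X^{-s}(\Omega)$ against $\psi_j\in X_0^s(\Omega)$, so $\int_\Omega\psi_j\,d\lambda\le C\eta_j$, and since $\psi_j\ge\chi_E$ this forces $\lambda(E)=0$. On the other hand, because $\nu$ is concentrated on $E$ and $\psi_j\equiv 1$ near $E$, the narrow convergence yields $\int_\Omega g_n(1-\psi_j)\,dx\to\int_\Omega (1-\psi_j)\,d\nu=0$ as $n\to\infty$; this is the ingredient that discards the contribution of the source away from $E$ and lets me identify $\lambda$ as a measure concentrated on $E$.

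Combining the two facts, $\lambda$ is a nonnegative measure which lies in $X^{-s}(\Omega)$ and is concentrated on the set $E$ of zero $(s,2)$-capacity. Since a measure in the dual $X^{-s}(\Omega)$ does not charge sets of zero $(s,2)$-capacity, necessarily $\lambda\equiv 0$; hence $(-\Delta)^s u=0$ with $u\in X_0^s(\Omega)$, so $u\equiv 0$. This gives $u_n\rightharpoonup 0$ in $X_0^s(\Omega)$ when $\gamma=1$, and (running the same argument for $v_n=u_n^{\frac{\gamma+1}{2}}$, using that its weak limit solves the corresponding limiting equation) $u_n^{\frac{\gamma+1}{2}}\rightharpoonup 0$ in $X_0^s(\Omega)$ when $\gamma>1$.

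The main obstacle, and the place where genuine care is required, is precisely the rigorous passage from the narrow-convergence statement $\int_\Omega g_n(1-\psi_j)\,dx\to 0$ to the concentration of $\lambda$ on $E$: the singular denominator $\big(u_n+\tfrac1n\big)^{-\gamma}$ can blow up exactly where $u_n$ is small, so one cannot naively bound $\int_\Omega g_n(1-\psi_j)\big(u_n+\tfrac1n\big)^{-\gamma}\,dx$ by a constant multiple of $\int_\Omega g_n(1-\psi_j)\,dx$. Handling this term forces one to work at the level of the bilinear Gagliardo form rather than through a (nonexistent) Leibniz rule for $(-\Delta)^s$, rewriting the off-$E$ contribution by using $\phi(1-\psi_j)$ as an admissible test function in $X_0^s(\Omega)\cap L^\infty(\Omega)$ and exploiting the weak convergence $u_n\rightharpoonup u$, in the spirit of \cite{MR3479207, MR3393266}; the case $\gamma>1$ is treated identically after replacing $u_n$ by $u_n^{\frac{\gamma+1}{2}}$ and invoking \eqref{EqElementary}.
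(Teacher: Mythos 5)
Your overall strategy --- identify the weak limit $\lambda$ of the right-hand sides $h_n=g_n\big(u_n+\tfrac1n\big)^{-\gamma}$ as a nonnegative measure in $X^{-s}(\Omega)$, show it is both concentrated on $E$ and charges no set of zero $(s,2)$-capacity, and conclude $\lambda\equiv 0$ --- has a genuine gap exactly at the step you yourself flag as ``the main obstacle'', and you do not close it. To show that $\lambda$ is concentrated on $E$ you must prove $\lim_{j}\lim_{n}\int_\Omega g_n(1-\psi_j)\big(u_n+\tfrac1n\big)^{-\gamma}\,dx=0$, and the only quantitative input you have away from $E$ is $\int_\Omega g_n(1-\psi_j)\,dx\to\int_\Omega(1-\psi_j)\,d\nu=0$. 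The factor $\big(u_n+\tfrac1n\big)^{-\gamma}$ is as large as $n^{\gamma}$ wherever $u_n$ is small, and --- unlike in Theorem \ref{Relaxation-Theorem}, where comparison with the solution $v_n$ of the problem with datum $f_n$ gives a lower bound $u_n\ge C_\omega>0$ on the support of the test function --- here no such lower bound is available: the whole point of the statement is that $u_n\to 0$. So the off-$E$ contribution cannot be discarded by the argument you sketch, and the proposed fix (``work at the level of the bilinear form with $\phi(1-\psi_j)$'') is neither carried out nor obviously capable of producing the needed smallness. In the case $\gamma>1$ there is a further problem: only $u_n^{\frac{\gamma+1}{2}}$ is bounded in $X_0^s(\Omega)$, so $h_n=(-\Delta)^s u_n$ need not be bounded in $X^{-s}(\Omega)$ and your limit object $\lambda$ is not even defined, while $(-\Delta)^s\big(u_n^{\frac{\gamma+1}{2}}\big)$ is not related to $h_n$ by any usable identity.

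The paper's proof sidesteps the obstruction entirely by never splitting the right-hand side into ``near $E$'' and ``away from $E$'' pieces. It tests with $T_k(u_n)(1-\Psi_\eta)$ (resp.\ $T_k^{\gamma}(u_n)(1-\Psi_\eta)$ when $\gamma>1$): since $T_k(u_n)\le u_n+\tfrac1n$ and $T_k^{\gamma}(u_n)\le\big(u_n+\tfrac1n\big)^{\gamma}$, the singular denominator is cancelled pointwise on all of $\Omega$, and the \emph{entire} right-hand side is bounded by $\int_\Omega g_n(1-\Psi_\eta)\,dx$, whose limsup is at most $\eta$ by narrow convergence and the choice of $\Psi_\eta$. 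The remaining work is then on the left-hand side: expand the nonlocal bilinear form for the product $T_k(u_n)(1-\Psi_\eta)$, pass to the limit via Fatou's lemma in the two sign-definite terms, and control the commutator term by $k\,\|u_n\|_{X_0^s(\Omega)}\|\Psi_\eta\|_{X_0^s(\Omega)}\to 0$, which yields $\|T_k(u)\|_{X_0^s(\Omega)}=0$ for every $k$. To rescue your route you would need an independent proof that the nonnegative distribution $(-\Delta)^s u$ is supported in $E$; that is precisely what the test-function cancellation accomplishes in the paper, and your proposal as written does not supply a substitute.
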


\begin{proof} 
We follow \cite[Theorem 3.4 and Theorem 4.5]{MR2592976}. 
\hfil \newline
\textbf{First case: $ \gamma = 1$.}

Since $g_n$ is bounded in $L^1(\Omega)$, a similar argument as in the proof of Proposition \ref{Pro2} shows that $u_n$ is bounded in $X_0^s(\Omega)$. Therefore, up to a subsequence, it converges weakly in $X_0^s(\Omega)$, and a.e. in $\Omega$, to some function $u$. Using the definition of the $(s,2)$-capacity and noting that the set $E$ where the measure is concentrated has zero $(s,2)$-capacity, we can find that for any $\eta>0$; there exists $\Psi_{\eta} \in C_c^1(\Omega)$ such that
\begin{equation}
\label{ReEq2}
0 \leq \Psi_{\eta} \leq 1, \qquad 0 \leq \int_{\Omega} (1- \Psi_{\eta}) \, d\nu \leq \eta, \qquad \int_{\mathbb{R}^N}  |(-\Delta)^{\frac{s}{2}} \Psi_{\eta}|^2 \, dx \leq \eta.
\end{equation}
Since $g_n$ converges to $\nu$ in the narrow topology of measures; one has from \eqref{ReEq2} that 
\begin{equation}
\label{ReEq3}
\lim_{n \to + \infty} \int_{\Omega} g_n(1- \Psi_{\eta}) \, dx = \int_{\Omega} (1- \Psi_{\eta}) \, d\nu \leq \eta.
\end{equation}
Choosing $T_k(u_n) \big(1- \Psi_{\eta} \big)$ as test function in \eqref{ReEq1} and using \eqref{Eq3.349n} we deduce 
\begin{equation}
\label{ReEqT01}
\frac{C_{N,s}}{2} \iint_{D_{\Omega}} \frac{\big(u_n(x)-u_n(y) \big) \Big(T_k\big(u_n\big)(x) \big(1-\Psi_{\eta}(x) \big)-T_k\big(u_n\big)(y) \big(1-\Psi_{\eta}(y) \big) \Big)}{|x-y|^{N+2s}} \, dxdy = \int_{\Omega} \frac{g_n(x)}{u_n+\frac{1}{n}} T_k(u_n) \big(1- \Psi_{\eta} \big) \, dx. 
\end{equation}
For the right-hand side, noting that $\dfrac{T_k(u_n)}{u_n+\frac{1}{n}} \leq 1$, we have
$$ \int_{\Omega} \frac{g_n(x)}{u_n+\frac{1}{n}} T_k(u_n) \big(1- \Psi_{\eta}\big) \, dx \leq \int_{\Omega} g_n (1- \Psi_{\eta}) \, dx, $$
so that by \eqref{ReEq3},
\begin{equation}
\label{ReEqTT00}
0 \leq \limsup_{n \to + \infty} \int_{\Omega} \frac{g_n(x)}{u_n+\frac{1}{n}} T_k(u_n) (1- \Psi_{\eta}) \, dx \leq \eta.
\end{equation}
For the left-hand side of \eqref{ReEqT01} first of all, we consider the following elementary but tedious calculations.
\begin{equation}
\label{ReEqT1}
\begin{aligned}
& \big(u_n(x)-u_n(y) \big) \Big(T_k\big(u_n\big)(x) \big(1-\Psi_{\eta}(x)\big)-T_k\big(u_n\big)(y)\big(1-\Psi_{\eta}(y)\big) \Big) \\
& \qquad \qquad = \big(u_n(x)-u_n(y) \big) \Big(T_k\big(u_n\big)(x)-T_k\big(u_n\big)(y) \Big) \\
& \qquad \qquad \quad + \big(u_n(x)-u_n(y) \big) \Big(T_k\big(u_n\big)(y) \Psi_{\eta}(y)-T_k\big(u_n\big)(x) \Psi_{\eta}(x) \Big).
\end{aligned}
\end{equation}
For the second term on the right-hand side of \eqref{ReEqT1} we have
\begin{equation}
\label{ReEqT2}
\begin{aligned}
& \big(u_n(x)-u_n(y) \big) \Big(T_k \big(u_n \big)(y) \Psi_{\eta}(y)-T_k \big(u_n) \big(x)\Psi_{\eta}(x) \Big) \\
& \qquad \qquad = T_k(u_n)(y) \big(u_n(x)-u_n(y) \big) \big( \Psi_{\eta}(y)-\Psi_{\eta}(x) \big) \\
& \qquad \qquad \quad + \Psi_{\eta}(x) \big(u_n(x)-u_n(y) \big) \big( T_k(u_n)(y)-T_k(u_n)(x) \big). 
\end{aligned}
\end{equation}
Therefore from \eqref{ReEqT1} and \eqref{ReEqT2} we obtain
$$ 
\begin{aligned}
& \big(u_n(x)-u_n(y) \big) \Big(T_k\big(u_n\big)(x) \big(1-\Psi_{\eta}(x)\big) - T_k\big(u_n\big)(y)\big(1-\Psi_{\eta}(y)\big) \Big) \\
& \qquad \qquad = \big(u_n(x)-u_n(y) \big) \Big(T_k\big(u_n\big)(x)-T_k\big(u_n\big)(y) \Big) \\
& \qquad \qquad \quad + \Psi_{\eta}(x) \big(u_n(x)-u_n(y) \big) \big( T_k(u_n)(y)-T_k(u_n)(x) \big) \\
& \qquad \qquad \quad + T_k(u_n)(y) \big(u_n(x)-u_n(y) \big) \big( \Psi_{\eta}(y)-\Psi_{\eta}(x) \big). 
\end{aligned}
$$ 
Invoking this equality in the left-hand side of \eqref{ReEqT01} gives
\begin{align} \nonumber
& \iint_{D_{\Omega}}  \frac{\big(u_n(x)-u_n(y) \big) \Big(T_k(u_n)(x)(1-\Psi_{\eta}(x)) -T_k(u_n)(y)(1-\Psi_{\eta}(y)) \Big)}{|x-y|^{N+2s}} \, dxdy \\ \nonumber
&  \quad = \iint_{D_{\Omega}} \frac{\big(u_n(x)-u_n(y) \big) \Big(T_k\big(u_n\big)(x)-T_k\big(u_n\big)(y) \Big)}{|x-y|^{N+2s}} \, dxdy \\ \label{ReEqTT001}
&  \qquad +  \iint_{D_{\Omega}} \frac{\Psi_{\eta}(x) \big(u_n(x)-u_n(y) \big) \big( T_k(u_n)(y)-T_k(u_n)(x) \big)}{|x-y|^{N+2s}} \, dxdy \\ \nonumber
&  \qquad + \iint_{D_{\Omega}} \frac{T_k(u_n)(y) \big(u_n(x)-u_n(y) \big) \big( \Psi_{\eta}(y)-\Psi_{\eta}(x) \big)}{|x-y|^{N+2s}} \, dxdy. 
\end{align}
For the first and second terms in the right-hand side of \eqref{ReEqTT001}, by letting $n \to + \infty$, and applying Fatou's lemma, we obtain
\begin{equation}
\label{ReEqTT11}
\begin{aligned}
& \iint_{D_{\Omega}} \frac{\big(u(x)-u(y) \big) \Big(T_k\big(u\big)(x)-T_k\big(u\big)(y) \Big)}{|x-y|^{N+2s}} \, dxdy \\
& \quad \leq \liminf_{n \to +\infty}\iint_{D_{\Omega}} \frac{\big(u_n(x)-u_n(y) \big) \Big(T_k\big(u_n\big)(x)-T_k\big(u_n\big)(y) \Big)}{|x-y|^{N+2s}} \, dxdy,
\end{aligned}
\end{equation}
and
\begin{equation}
\label{ReEqTT22}
\begin{aligned}
& \iint_{D_{\Omega}} \frac{\Psi_{\eta}(x) \big(u(x)-u(y) \big) \big( T_k(u)(y) - T_k(u)(x) \big)}{|x-y|^{N+2s}} \, dxdy \\ 
& \quad \leq \liminf_{n \to + \infty} \iint_{D_{\Omega}} \frac{\Psi_{\eta}(x) \big(u_n(x)-u_n(y) \big) \big( T_k(u_n)(y)-T_k(u_n)(x) \big)}{|x-y|^{N+2s}} \, dxdy.
\end{aligned}
\end{equation}
Now by considering \eqref{ReEqT01} and \eqref{ReEqTT001} and using \eqref{ReEqTT00}, \eqref{ReEqTT11} and \eqref{ReEqTT22} we obtain
\begin{equation}
\label{ReEqTT22.55}
\begin{aligned}
& \iint_{D_{\Omega}} \frac{\big(u(x)-u(y) \big) \Big(T_k\big(u\big)(x)-T_k\big(u\big)(y) \Big)}{|x-y|^{N+2s}} \, dxdy \\
& + \iint_{D_{\Omega}} \frac{\Psi_{\eta}(x) \big(u(x)-u(y) \big) \big( T_k(u)(y) - T_k(u)(x) \big)}{|x-y|^{N+2s}} \, dxdy \\
& +\limsup_{n \to + \infty} \iint_{D_{\Omega}} \frac{T_k(u_n)(y) \big(u_n(x)-u_n(y) \big) \big( \Psi_{\eta}(y)-\Psi_{\eta}(x) \big)}{|x-y|^{N+2s}} \, dx dy \leq \eta.
\end{aligned}
\end{equation}
For the integral under the limit in \eqref{ReEqTT22.55}, by using \eqref{Eq3.349n}, we have
\begin{equation}
\label{ReEqTT22.56}
\Bigg| \iint_{D_{\Omega}} \frac{T_k(u_n)(y) \big(u_n(x)-u_n(y) \big) \big( \Psi_{\eta}(y)-\Psi_{\eta}(x) \big)}{|x-y|^{N+2s}} \, dx dy \Bigg| \leq k \Bigg| \int_{\mathbb{R}^N} (-\Delta)^{\frac{s}{2}} u_n (-\Delta)^{\frac{s}{2}} \Psi_{\eta} \, dx \Bigg| \leq k \|u_n\|_{X_0^s(\Omega)} \| \Psi_{\eta} \|_{X_0^s(\Omega)}.
\end{equation}
Therefore, by the boundedness of $u_n$ in $X_0^s(\Omega)$, and also \eqref{ReEqTT22.56} and \eqref{ReEqTT22.55} we get
\begin{equation}
\label{ReEqTT22.557}
\begin{aligned}
& \iint_{D_{\Omega}} \frac{\big(u(x)-u(y) \big) \Big(T_k\big(u\big)(x)-T_k\big(u\big)(y) \Big)}{|x-y|^{N+2s}} \, dxdy \\
& \quad + \iint_{D_{\Omega}} \frac{\Psi_{\eta}(x) \big(u(x)-u(y) \big) \big( T_k(u)(y) - T_k(u)(x) \big)}{|x-y|^{N+2s}} \, dxdy \\
& \leq  \eta-C \| \Psi_{\eta} \|_{X_0^s(\Omega)},
\end{aligned}
\end{equation}
for some $C>0$. Now letting $\eta \to 0^+$ in \eqref{ReEqTT22.557}, and noting that, by \eqref{ReEq3}, $\Psi_{\eta} \to 0$ strongly in $X_0^s(\Omega)$ and also up to a subsequence, a.e. in $\Omega$, we get
\begin{equation}
\label{ReEq3.05}
\iint_{D_{\Omega}} \frac{\big(u(x)-u(y) \big) \Big(T_k\big(u\big)(x)-T_k\big(u\big)(y)\Big)}{|x-y|^{N+2s}} \, dxdy \leq 0.
\end{equation}
On the other hand, by using \eqref{Eq3.349n} and \cite[Proposition 4]{MR3393266}, or \eqref{EqElementary}, we get
\begin{align} \nonumber
\int_{\mathbb{R}^N} |(-\Delta)^{\frac{s}{2}} T_k(u)|^2 \, dx  & = \frac{C_{N,s}}{2} \iint_{D_{\Omega}} \frac{|T_k\big(u\big)(x)-T_k\big(u\big)(y)|^2}{|x-y|^{N+2s}} \, dxdy \\ \label{ReEqT3}
& \leq \frac{C_{N,s}}{2} \iint_{D_{\Omega}} \frac{\big(u(x)-u(y) \big) \Big(T_k\big(u\big)(x)-T_k\big(u\big)(y)\Big)}{|x-y|^{N+2s}} \, dxdy.
\end{align}
Thus \eqref{ReEq3.05} and \eqref{ReEqT3} implies that $T_k(u)=0$, and the proof for the case $\gamma=1$ is complete.

\hfil \newline
\textbf{Second case: $ \gamma > 1$.}

By the boundedness of $g_n$ in $L^1(\Omega)$, a similar argument as in the proof of Proposition \ref{Pro2} shows that $u_n^{\frac{\gamma+1}{2}}$ is bounded in $X_0^s(\Omega)$. Therefore, up to a subsequence, it converges weakly in $X_0^s(\Omega)$, and a.e. in $\Omega$, to some function $u$. Again, let $\Psi_{\eta} \in C_c^1(\Omega)$ be as in \eqref{ReEq2}. Testing \eqref{ReEq1} with $T_k^{\gamma}(u_n) \big(1- \Psi_{\eta}\big) $, and using \eqref{Eq3.349n} we obtain
$$
\frac{C_{N,s}}{2} \iint_{D_{\Omega}} \frac{\big(u_n(x)-u_n(y) \big) \Big(T_k^{\gamma}\big(u_n\big)(x) \big(1-\Psi_{\eta}(x) \big)-T_k^{\gamma} \big(u_n \big)(y) \big(1-\Psi_{\eta}(y) \big) \Big)}{|x-y|^{N+2s}} \, dxdy = \int_{\Omega} \frac{g_n(x)}{(u_n+\frac{1}{n})^{\gamma}} T_k^{\gamma}(u_n) \big(1- \Psi_{\eta} \big) \, dx. 
$$
For the right-hand side, since $\dfrac{T_k^{\gamma}(u_n)}{(u_n+\frac{1}{n})^{\gamma}} \leq 1$, we get
$$ \int_{\Omega} \frac{g_n(x)}{(u_n+\frac{1}{n})^{\gamma}} T_k^{\gamma}(u_n) (1- \Psi_{\eta}) \, dx \leq \int_{\Omega} g_n (1- \Psi_{\eta}) \, dx, $$
so that by \eqref{ReEq3}, we obtain
$$ 0 \leq \limsup_{n \to + \infty} \int_{\Omega} \frac{g_n(x)}{(u_n+\frac{1}{n})^{\gamma}} T_k^{\gamma}(u_n) \big(1- \Psi_{\eta} \big) \, dx \leq \eta. $$
Similar to the previous case, by letting $\eta \to 0^+$, $n \to + \infty$, we obtain
$$ \frac{C_{N,s}}{2} \iint_{D_{\Omega}} \frac{\big(u(x)-u(y) \big) \big(T_k^{\gamma}(u)(x)-T_k^{\gamma}(u)(y) \big)}{|x-y|^{N+2s}} \, dxdy \leq 0.$$
Now \eqref{Eq3.349n} together with the inequality \eqref{EqElementary} gives
$$ \int_{\mathbb{R}^N} |(-\Delta)^{\frac{s}{2}} T_k^{\frac{\gamma+1}{2}}(u)|^2 \, dx =0,$$
which is the desired result.
\end{proof}

In the next theorem, we will see that if we approximate a datum of the form $f+\nu$, with $f \in L^1(\Omega)$, and $\nu$ a singular measure with respect to $(s,2)$-capacity, only the $L^1$ term remains in the equation after the limiting procedure and the singular term $\nu$ "disappears."

\begin{theorem}[Relaxation Theorem]
\label{Relaxation-Theorem}
Let $f$ be a non-negative function in $L^1(\Omega)$, and let $f_n = T_n(f)$. Let $\nu$ and $g_n$ be as in Theorem \ref{ReThm1}. Also, let $u_n$ be the sequence of solutions to the following problems.
\begin{equation}
\label{ReEq4}
\begin{cases}
(-\Delta )^s u_n = \dfrac{K(x)}{u_n^{q}} + \dfrac{f_n+g_n}{(u_n+\frac{1}{n})^{\gamma}}+\mu_n & \mathrm{in} \,\, \Omega,\\ u_n>0 & \mathrm{in} \,\, \Omega, \\ u_n=0 & \mathrm{in} \,\,  \big(\mathbb{R}^N \setminus \Omega \big).
\end{cases}
\end{equation}
\begin{enumerate}[leftmargin=*, label=(\roman*)]
\item
If $ 0<\gamma \leq 1$, then $u_n$ converges to a function $u$, weakly in $ X_0^{s_1,p}(\Omega)$, for all $s_1<s$ and for all $p<\frac{N}{N-s}$, where $u$ is the weak solution to \eqref{Eq1} with datum $f$.
\item
If $ \gamma > 1$, then $u_n$ converges to a function $u$, weakly in $ X_{\mathrm{loc}}^{s_1,p}(\Omega)$, for all $s_1<s$ and for all $p<\frac{N}{N-s}$, where $u$ is the weak solution to \eqref{Eq1} with datum $f$.
\end{enumerate}
\end{theorem}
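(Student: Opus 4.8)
The plan is to combine the uniform estimates already available for the approximating problems with the vanishing result of Proposition \ref{ReThm1}, the latter being imported through a comparison argument that isolates the effect of the singular measure $\nu$. First I would record the a priori bounds. Since $f_n\le f\in L^1(\Omega)$ and $\int_\Omega g_n\,dx\to\nu(\Omega)$ (narrow convergence tested against the constant $1$), the datum $f_n+g_n$ is bounded in $L^1(\Omega)$, so the proofs of Proposition \ref{Pro2} and Proposition \ref{Pro3} apply verbatim to \eqref{ReEq4}. This gives that $\{u_n\}$ is bounded in $X_0^{s_1,p}(\Omega)$ when $0<\gamma\le1$ (resp. in $X_{\mathrm{loc}}^{s_1,p}(\Omega)$ when $\gamma>1$) for every $s_1<s$ and $p<\tfrac{N}{N-s}$. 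Hence, up to a subsequence, $u_n$ converges weakly in the corresponding space and a.e. in $\Omega$ to some $u$, and the strong maximum principle yields the uniform lower bound \eqref{STRICTP}, namely $u_n\ge w_1\ge C_K>0$ on every $K\Subset\Omega$.

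Next I would fix a test function $\phi\in\mathcal{T}(\Omega)$, with $(-\Delta)^s\phi=\varphi\in C_c^\infty(\Omega)$ and $\operatorname{supp}\phi\subset K_\phi\Subset\Omega$, and pass to the limit in the weak formulation of \eqref{ReEq4}. The term $\int_{\mathbb{R}^N}u_n(-\Delta)^s\phi$ converges by the weak (local) convergence of $u_n$; the terms $\int_\Omega K u_n^{-q}\phi$ and $\int_\Omega\mu_n\phi$ are treated exactly as in the proof of Theorem \ref{Thm1}, using \eqref{STRICTP} and dominated convergence on $K_\phi$; and the regular singular term converges,
\[
\int_\Omega \frac{f_n}{(u_n+\tfrac1n)^{\gamma}}\,\phi\,dx \longrightarrow \int_\Omega \frac{f}{u^{\gamma}}\,\phi\,dx,
\]
again by dominated convergence, since on $K_\phi$ one has the bound $f_n(u_n+\tfrac1n)^{-\gamma}|\phi|\le C_{K_\phi}^{-\gamma}\|\phi\|_\infty\,f\in L^1(\Omega)$ together with the a.e. convergence $u_n\to u$.

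The heart of the matter, and the step I expect to be the main obstacle, is to show that the singular measure leaves no trace, i.e. $\int_\Omega g_n(u_n+\tfrac1n)^{-\gamma}\phi\,dx\to0$. Here I would invoke Proposition \ref{ReThm1} via comparison. Let $z_n$ be the solution of \eqref{ReEq1} with datum $g_n$. The map $w\mapsto(-\Delta)^s w-g_n(w+\tfrac1n)^{-\gamma}$ is monotone increasing (the fractional Laplacian is monotone and $-g_n(w+\tfrac1n)^{-\gamma}$ is nondecreasing in $w$ as $g_n\ge0$), and
\[
(-\Delta)^s u_n-\frac{g_n}{(u_n+\tfrac1n)^{\gamma}}=\frac{K}{u_n^{q}}+\frac{f_n}{(u_n+\tfrac1n)^{\gamma}}+\mu_n\ge0=(-\Delta)^s z_n-\frac{g_n}{(z_n+\tfrac1n)^{\gamma}},
\]
so the comparison principle, with matching zero exterior data, gives $u_n\ge z_n$ in $\Omega$. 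Consequently, for $\phi\ge0$,
\[
0\le\int_\Omega \frac{g_n}{(u_n+\tfrac1n)^{\gamma}}\,\phi\,dx\le\int_\Omega \frac{g_n}{(z_n+\tfrac1n)^{\gamma}}\,\phi\,dx=\int_{\mathbb{R}^N} z_n\,(-\Delta)^s\phi\,dx=\int_\Omega z_n\,\varphi\,dx,
\]
where the middle identity is the weak formulation of the $z_n$-problem together with the self-adjointness \eqref{Eq3.349n}. By Proposition \ref{ReThm1}, $z_n\to0$ weakly in $X_0^s(\Omega)$ when $\gamma=1$, hence weakly in $L^2(\Omega)$, so $\int_\Omega z_n\varphi\to0$; when $\gamma>1$, $z_n^{(\gamma+1)/2}\to0$ in $X_0^s(\Omega)$, whence $z_n^{(\gamma+1)/2}\to0$ in $L^2(\Omega)$ by the compact embedding \eqref{Eq3.350}, so $z_n\to0$ in $L^{\gamma+1}(\Omega)\subset L^1(\Omega)$ and again $\int_\Omega z_n\varphi\to0$ since $\varphi$ is bounded. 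Splitting a general $\phi\in\mathcal{T}(\Omega)$ into $\phi^{\pm}$ closes this step.

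Collecting the limits shows that $u$ satisfies the weak formulation \eqref{Eq3.4} with datum $f$ and without $\nu$; combined with \eqref{STRICTP}, this makes $u$ a positive weak solution to \eqref{Eq1} with datum $f$. Finally, whenever the weak solution with datum $f$ is unique (for instance under the hypotheses of Theorem \ref{Thm2}), the limit is independent of the subsequence and the whole sequence $\{u_n\}$ converges. I expect the comparison $u_n\ge z_n$ to demand the most care, since one must justify the comparison principle for the fractional operator with a decreasing singular nonlinearity, and for $\gamma>1$ one must upgrade the weak convergence of $z_n^{(\gamma+1)/2}$ to the $L^1$-convergence of $z_n$ that makes $\int_\Omega z_n\varphi\,dx$ vanish on the compact support of $\varphi$.
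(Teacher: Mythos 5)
Your strategy for removing the singular term is genuinely different from the paper's, and as written it does not cover the whole statement. The paper never invokes Proposition \ref{ReThm1} in this proof: it tests \eqref{ReEq4} with $\phi(1-\Psi_\eta)$, where $\Psi_\eta$ is the capacity cut-off of \eqref{ReEq2}, and estimates
\[
\Bigl|\int_\Omega\frac{g_n\,\phi(1-\Psi_\eta)}{(u_n+\frac1n)^\gamma}\,dx\Bigr|\le\frac{\|\phi\|_{L^\infty(\Omega)}}{C_\omega^\gamma}\int_\Omega g_n(1-\Psi_\eta)\,dx\le\frac{\|\phi\|_{L^\infty(\Omega)}}{C_\omega^\gamma}\,\eta,
\]
using only the lower bound $u_n\ge C_\omega>0$ on $\omega=\operatorname{supp}\phi$; this is uniform in $\gamma>0$ and needs no information about the solution of the pure $g_n$-problem. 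Your route --- compare $u_n\ge z_n$ with $z_n$ solving \eqref{ReEq1} and then send $\int_\Omega\frac{g_n\phi}{(z_n+\frac1n)^\gamma}\,dx=\int_\Omega z_n\varphi\,dx$ to zero via Proposition \ref{ReThm1} --- is a legitimate alternative where it applies, but it inherits the limitation of that proposition, which is stated only for $\gamma=1$ and $\gamma>1$. Part (i) of the theorem covers $0<\gamma\le1$, and for $0<\gamma<1$ you have no vanishing statement for $z_n$ to quote; moreover the $X_0^s(\Omega)$-bound on $z_n$ used there rests on $\frac{z_n}{z_n+\frac1n}\le1$, which no longer controls $\int_\Omega g_nz_n^{1-\gamma}\,dx$ by $\|g_n\|_{L^1(\Omega)}$ when $\gamma<1$. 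This is the main gap, and it is precisely what the cut-off argument avoids.

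Two further points need repair. First, the splitting of $\phi$ into $\phi^{\pm}$ does not close the argument: $\phi^{\pm}\notin\mathcal{T}(\Omega)$, so the identity $\int_\Omega\frac{g_n\phi^{\pm}}{(z_n+\frac1n)^\gamma}\,dx=\int_\Omega z_n\varphi^{\pm}\,dx$ is unavailable. One can instead write the left side as $\int_{\mathbb{R}^N}(-\Delta)^{\frac{s}{2}}z_n(-\Delta)^{\frac{s}{2}}\phi^{\pm}\,dx$, which tends to zero when $\gamma=1$ because $z_n\rightharpoonup0$ in $X_0^s(\Omega)$; but for $\gamma>1$ you only control $z_n^{\frac{\gamma+1}{2}}$ in $X_0^s(\Omega)$, not $z_n$ itself, so the sign-changing case is left open (a fix is to dominate $|\phi|$ on $\omega$ by a multiple of a fixed nonnegative element of $\mathcal{T}(\Omega)$ bounded below on $\omega$). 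Second, the lower bound $u_n\ge C_\omega$ is not literally \eqref{STRICTP}: that estimate relies on the monotonicity in $n$ of the data, whereas $g_n$ is only narrowly convergent, so $u_n$ need not be monotone. The paper obtains the bound by comparison with the solution $v_n$ of \eqref{ReEq5}, whose datum $f_n=T_n(f)$ is increasing, giving $u_n\ge v_n\ge v_1\ge C_\omega$ on $\omega$. With these repairs --- and with the cut-off mechanism replacing Proposition \ref{ReThm1} for $0<\gamma<1$ --- your argument would go through; the remainder of your limit passage (the a priori bounds, the $K$, $f_n$ and $\mu_n$ terms) matches the paper.
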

\begin{proof} We only proof the first case $0<\gamma \leq1$. The second case can be handled in the same way.

Let $0<\gamma \leq1$. Once again, since $f_n +g_n$ is bounded in $L^1(\Omega)$, the same proofs as in Proposition \ref{Pro2} and Proposition \ref{Pro3} shows that $\{u_n\}$ is bounded in $X_0^{s_1,p}(\Omega)$, for all $s_1<s$ and for all $p<\frac{N}{N-s}$. Hence, up to a subsequence, it converges weakly in $X_0^{s_1,p}(\Omega)$ to some function $u$. Since $g_n \geq 0$, then we have the following inequality in the weak sense:
$$ (-\Delta)^s u_n = \dfrac{K(x)}{u_n^{q}} + \dfrac{f_n+g_n}{(u_n+\frac{1}{n})^{\gamma}}+\mu_n \geq \dfrac{f_n}{(u_n+\frac{1}{n})^{\gamma}}. $$
Now let $v_n$ be the solution to the following problem:
\begin{equation}
\label{ReEq5}
\begin{cases}
(-\Delta )^s v_n = \dfrac{f_n}{(v_n+\frac{1}{n})^{\gamma}} & \mathrm{in} \,\, \Omega,\\ v_n>0 & \mathrm{in} \,\, \Omega, \\ v_n=0 & \mathrm{in} \,\,  \big(\mathbb{R}^N \setminus \Omega \big).
\end{cases}
\end{equation}
The comparison principle implies that $u_n \geq v_n$. Let $\Psi_{\eta} \in C_c^1(\Omega)$ as in \eqref{ReEq2} and fix a $\phi \in \mathcal{T}(\Omega)$. Choosing $\phi (1- \Psi_{\eta})$ as a test function in \eqref{ReEq4} we have
\begin{equation}
\label{ReEq6}
\int_{\mathbb{R}^N} (-\Delta)^{\frac{s}{2}} u_n (-\Delta)^{\frac{s}{2}}[\phi (1- \Psi_{\eta})] \, dx = \int_{\Omega} \frac{K(x) \phi (1- \Psi_{\eta})}{u_n^q}\, dx  + \int_{\Omega} \frac{(f_n+g_n)\phi (1- \Psi_{\eta})}{(u_n+\frac{1}{n})^{\gamma}}\, dx + \int_{\Omega} \phi (1- \Psi_{\eta}) \mu_n \, dx.
\end{equation}
Since in problem \eqref{ReEq5}, the strong maximum principle provides that $v_n \geq v_1$, and also $v_1$ is stricly positive on the compact subsets of $\Omega$, therefore $u_n \geq C_{\omega}>0$ a.e. in $\omega=\mathrm{supp}(\phi)$. Moreover, since $u_n$ converges to $u$ weakly in $X_0^{s_1,p}(\Omega)$, and $\Psi_{\eta}$ tends to zero in $X_0^s(\Omega)$, and also a.e. in $\Omega$, as $\eta \to 0^+$, passing to the limit, we have
\begin{equation}
\label{ReEq7}
\begin{aligned}
& \lim_{\eta \to 0^+} \lim_{n \to + \infty} \int_{\mathbb{R}^N} (-\Delta)^{\frac{s}{2}} u_n (-\Delta)^{\frac{s}{2}} [\phi (1- \Psi_{\eta})] \, dx =\int_{\mathbb{R}^N} u (-\Delta)^{s} \phi \, dx, \\
& \lim_{\eta \to 0^+} \lim_{n \to + \infty} \int_{\Omega} \frac{K(x) \phi (1- \Psi_{\eta})}{u_n^q}\, dx = \int_{\Omega} \frac{K(x) \phi}{u^q}\, dx,\\
& \lim_{\eta \to 0^+} \lim_{n \to + \infty} \int_{\Omega} \frac{f_n \phi (1-\Psi_{\eta})}{(u_n+\frac{1}{n})^{\gamma}} \, dx = \int_{\Omega} \frac{f \phi}{u^{\gamma}} \, dx, \\
& \lim_{\eta \to 0^+} \lim_{n \to + \infty} \int_{\Omega} \phi (1- \Psi_{\eta}) \mu_n \, dx= \int_{\Omega} \phi \mu \, dx.
\end{aligned}
\end{equation}
Besides, using \eqref{ReEq2} together with the same argument as in the proof of Theorem \ref{ReThm1}, gives
$$ 0 \leq \limsup_{n \to +\infty} \Bigg| \int_{\Omega} \frac{g_n \phi (1- \Psi_{\eta}) }{(u_n+\frac{1}{n})^{\gamma}} \, dx \Bigg| \leq \limsup_{n \to + \infty} \frac{\|\phi\|_{L^{\infty}(\Omega)}}{C_{\omega}^{\gamma}} \int_{\Omega} g_n (1-\Psi_{\eta}) \, dx \leq \frac{\|\phi\|_{L^{\infty}(\Omega)}}{C_{\omega}^{\gamma}} \eta. $$
This implies
\begin{equation}
\label{ReEq8}
\lim_{\eta \to 0^+} \lim_{n \to + \infty} \int_{\Omega} \frac{g_n \phi (1-\Psi_{\eta})}{(u_n+\frac{1}{n})^{\gamma}} \, dx =0. 
\end{equation}
Finally \eqref{ReEq6}, \eqref{ReEq7}, and \eqref{ReEq8} implies that
$$ \int_{\mathbb{R}^N} u (-\Delta)^{s} \phi \, dx = \int_{\Omega} \frac{K(x) \phi}{u^q}\, dx + \int_{\Omega} \frac{f \phi}{u^{\gamma}} \, dx + \int_{\Omega} \phi \mu \, dx, \qquad \forall \phi \in \mathcal{T}(\Omega).$$
This completes the proof of Theorem.
\end{proof}

\begin{remark}
Recalling Remark \ref{REM1} and also considering the above Relaxation Theorem we have proved the existence of solutions to problem \eqref{Eq1} for any datum of the form $f+T+\nu$, with $f \in L^1(\Omega)$, $T \in X^{-s}(\Omega)$, and $\nu$ a measure singular with respect to $(s,2)$-capacity, at least for some specific regular $\mu$. 
\end{remark}

Finally, we want to discuss the existence of entropy solution for the datum $ \mu \in L^1(\Omega) + X^{-s}(\Omega)$. Before, we need to recall the following definition from the literature.

\begin{definition}
We say that measure $\mu$ is diffuse with respect to $(s,2)$-capacity, if and only if $\mu(A)=0$, for every Borel set $A$ with zero $(s,2)$-capacity. 
\end{definition}

The following proposition characterizes the diffuse measures with respect to $(s,2)$-capacity. The proof can be obtained by adapting the proof of \cite[Theorem 2.1]{MR1409661}.
\begin{proposition}
\label{Proposition1-4}
Let $\mu$ be a bounded Radon measure in $\Omega$. Then the following statements are equivalent.
\begin{enumerate} 
\item
$\mu \in L^1(\Omega) + X^{-s}(\Omega)$.
\item
$\mu$ is diffuse with respect to $(s,2)$-capacity.
\end{enumerate}
\end{proposition}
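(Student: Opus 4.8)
The statement is the fractional ($p=2$) analogue of the Boccardo--Gallou\"et--Orsina characterization, so the plan is to follow \cite[Theorem 2.1]{MR1409661} and replace the gradient/$W^{-1,p'}$ machinery by the $(-\Delta)^{\frac{s}{2}}$/$X^{-s}(\Omega)$ one, proving the two implications separately. The direction $(1)\Rightarrow(2)$ is routine. Writing $\mu=f+T$ with $0\le f\in L^1(\Omega)$ and $T\in X^{-s}(\Omega)$, and reducing to $\mu\ge0$ by a Jordan decomposition, let $A$ be a Borel set with $\mathrm{Cap}_{s,2}(A)=0$. The Sobolev embedding \eqref{Eq3.350} gives $|A|^{1/2_s^*}=\|\chi_A\|_{L^{2_s^*}(\Omega)}\le C\,\mathrm{Cap}_{s,2}(A)^{1/2}=0$, hence $|A|=0$ and the $L^1$ part does not charge $A$. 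By the definition of $\mathrm{Cap}_{s,2}$ and outer regularity of the capacity, I choose $\Psi_n\in C_c^\infty(\Omega)$ with $0\le\Psi_n\le1$, $\Psi_n\ge1$ on a neighbourhood of $A$, and $\|\Psi_n\|_{X_0^s(\Omega)}\to0$; then $0\le\mu(A)\le\int_\Omega\Psi_n\,d\mu=\int_\Omega f\Psi_n\,dx+\langle T,\Psi_n\rangle\to0$, since $f\Psi_n\to0$ in $L^1(\Omega)$ (dominated convergence, using $|A|=0$) and $|\langle T,\Psi_n\rangle|\le\|T\|_{X^{-s}(\Omega)}\|\Psi_n\|_{X_0^s(\Omega)}\to0$. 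Thus $\mu(A)=0$.

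For $(2)\Rightarrow(1)$, by the Hahn--Jordan decomposition $\mu=\mu^+-\mu^-$, and since $\mu^\pm\le|\mu|$ are again diffuse, it suffices to split a nonnegative diffuse measure. Because $X_0^s(\Omega)$ is a Hilbert space, the Riesz representation identifies $X^{-s}(\Omega)$ with $(-\Delta)^s X_0^s(\Omega)$, so the target is a writing $\mu=f+(-\Delta)^s v$ with $0\le f\in L^1(\Omega)$ and $v\in X_0^s(\Omega)$. Following \cite{MR1409661, MR1760541}, I would approximate $\mu$ from below by an increasing sequence of bounded nonnegative functions $\mu_n$ (which is possible precisely for diffuse data, cf.\ Remark \ref{RemEntropy} and Proposition \ref{ReThm1}), solve $(-\Delta)^s v_n=\mu_n$ by Lax--Milgram so that $v_n\in X_0^s(\Omega)$, and extract the decomposition in the limit.

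The core is a uniform estimate isolating the $X^{-s}$ part from an $L^1$ remainder. Testing the equation for $v_n$ against the capacitary cut-offs $\Psi_\eta$ of \eqref{ReEq2}, and expanding the nonlocal product exactly as in \eqref{ReEqT1}--\eqref{ReEqT2} together with the bound \eqref{ReEqTT22.56} used for Proposition \ref{ReThm1}, one shows that the portion of the mass of $\mu_n$ concentrating near any set of zero capacity is controlled by $\|\Psi_\eta\|_{X_0^s(\Omega)}$ and therefore, by the diffuse hypothesis, disappears in the limit $\eta\to0^+$. This yields a writing $v_n=\zeta_n+\rho_n$ in which $\{\zeta_n\}$ is bounded in $X_0^s(\Omega)$ — whence, up to a subsequence, $\zeta_n\rightharpoonup v$ and $(-\Delta)^s\zeta_n\to(-\Delta)^s v$ in $X^{-s}(\Omega)$ — while $(-\Delta)^s\rho_n$ stays bounded in $L^1(\Omega)$ and converges to some $f\in L^1(\Omega)$. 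Passing to the limit in $\mathcal D'(\Omega)$ gives $\mu=f+(-\Delta)^s v$, i.e.\ $\mu\in L^1(\Omega)+X^{-s}(\Omega)$.

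The main obstacle is exactly this splitting in the hard direction: extracting an $X_0^s$-bounded component whose complement is only $L^1$-bounded. Here the diffuse hypothesis must be used decisively, for without it the mass could concentrate on a set of zero $(s,2)$-capacity and the would-be $X^{-s}$ component would blow up — the same mechanism that forces the singular term to ``disappear'' in Proposition \ref{ReThm1} and Theorem \ref{Relaxation-Theorem}. The genuinely fractional difficulty is that $(-\Delta)^s$ is nonlocal, so the cut-off procedure obeys no pointwise Leibniz rule; the cross terms over $D_\Omega$ generated by multiplying by $(1-\Psi_\eta)$ must be absorbed through the energy bound \eqref{ReEqTT22.56} rather than a product rule, and controlling these uniformly in $n$ is the technical crux.
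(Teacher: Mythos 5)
The paper does not actually write out a proof of this proposition: it only says the result follows by adapting \cite[Theorem 2.1]{MR1409661}, so there is no in-paper argument to match you against. Your implication $(1)\Rightarrow(2)$ is correct and is exactly the routine adaptation (Sobolev embedding to kill the $L^1$ part on Lebesgue-null sets, capacitary cut-offs $\Psi_n$ with $\|\Psi_n\|_{X_0^s(\Omega)}\to 0$ paired against the $X^{-s}$ part), modulo the standard care needed to reduce a signed $\mu$ to $\mu\ge 0$; in this paper $\mu$ is non-negative anyway.

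The implication $(2)\Rightarrow(1)$, however, has a genuine gap at precisely the decisive step. First, you launch the argument from the existence of a nondecreasing sequence of bounded functions $\mu_n\uparrow\mu$, citing Remark \ref{RemEntropy} and the surrounding discussion; but in this paper that approximation property for diffuse measures is Proposition \ref{Proposition2-4}, which sits downstream of (and in the standard literature is derived from) the very decomposition $\mu=f+T$ you are trying to prove, so the argument is circular as organized. Second, and more seriously, the claimed splitting $v_n=\zeta_n+\rho_n$ with $\{\zeta_n\}$ bounded in $X_0^s(\Omega)$ and $(-\Delta)^s\rho_n$ bounded in $L^1(\Omega)$ \emph{and converging to some $f\in L^1(\Omega)$} is asserted, not constructed. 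Testing against $\Psi_\eta$ and invoking the diffuse hypothesis shows that no mass concentrates on zero-capacity sets, but it does not produce such a splitting; and boundedness in $L^1(\Omega)$ only yields weak-$*$ convergence to a \emph{measure}, not to an $L^1$ function --- obtaining an $L^1$ limit requires equi-integrability, which is exactly what is at stake. The missing ingredient from \cite{MR1409661} is the key lemma that a non-negative measure dominated by a constant multiple of $\mathrm{Cap}_{s,2}$ belongs to $X^{-s}(\Omega)$, combined with the quantitative absolute continuity of a diffuse $\mu$ with respect to capacity and an exhaustion of $\Omega$ by sets on which $\mu\le k\,\mathrm{Cap}_{s,2}$; without a fractional version of that lemma the hard direction does not close.
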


Finally, by mimicking the proof of \cite[Proposition 2.1]{MR3659372}, we have the following proposition.
\begin{proposition}
\label{Proposition2-4}
If $\mu$ is diffuse with respect to $(s,2)$-capacity, then there exists a nondecreasing sequence $(\mu_n)_{n \in \mathbb{N}}$ of smooth bounded functions, with compact support in $\Omega$ such that
\begin{itemize}
\item
for every $n \in \mathbb{N}$, there exists $C_n>0$ such that for every $\xi \in C_{c}^{\infty}(\Omega)$,
$$ \Bigg| \int_{\Omega} \xi \, d\mu_n \Bigg| \leq C_n \|(-\Delta)^{\frac{s}{2}} \xi \|_{L^2(\Omega)}. $$
\item
$(\mu_n)_{n \in \mathbb{N}}$ converges strongly to $\mu$ in the strong topology of the space of finite Radon measures, i.e.
$$ \int_{\Omega} d |\mu_n| \to \int_{\Omega} d |\mu |. $$
\end{itemize}
\end{proposition}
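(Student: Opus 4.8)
The plan is to transport the construction of \cite[Proposition 2.1]{MR3659372} to the present non-local framework, in which the capacity is generated by $(-\Delta)^{\frac{s}{2}}$ and the energy space is $X_0^s(\Omega)$. The natural first move is to convert the hypothesis into a usable structural statement: by Proposition \ref{Proposition1-4}, the assumption that $\mu$ is diffuse with respect to the $(s,2)$-capacity is equivalent to $\mu \in L^1(\Omega)+X^{-s}(\Omega)$, and it is exactly this membership that makes a regular monotone approximation feasible. I would fix such a splitting of $\mu$ and keep track of the signs throughout, using that $\mu \geq 0$.

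The construction of $(\mu_n)$ I would carry out so as to separate the two components. For the $L^1$-part the truncations $T_m(\cdot)$ already provide a nondecreasing family of bounded functions converging strongly in $L^1(\Omega)$, so here monotonicity, the uniform bound, and the strong convergence are transparent. The genuinely measure-theoretic part is the $X^{-s}$-component, which I would regularize by a smoothing adapted to the non-local operator (a resolvent- or potential-type construction), and then mollify with a standard kernel and multiply by an increasing sequence of cut-offs $\chi_n \in C_c^\infty(\Omega)$ with $\chi_n \uparrow 1$. A diagonal choice of the regularization, mollification, and cut-off parameters is what should let me force the resulting sequence to be nondecreasing while remaining smooth and compactly supported.

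Of the two required properties, the first bullet is the routine one: since every $\mu_n$ is bounded with compact support, $\mu_n \in X^{-s}(\Omega)$ is immediate, because $\mu_n \in L^{(2_s^*)'}(\Omega)$ and, by the embedding \eqref{Eq3.350} with $p=2$ together with H\"older's inequality, $\big| \int_\Omega \xi\, d\mu_n \big| \leq \|\mu_n\|_{L^{(2_s^*)'}(\Omega)} \|\xi\|_{L^{2_s^*}(\Omega)} \leq C_n \|\xi\|_{X_0^s(\Omega)}$; rewriting the right-hand side through the identity $\|\xi\|_{X_0^s(\Omega)}^2 = 2C_{N,s}^{-1}\|(-\Delta)^{\frac{s}{2}}\xi\|_{L^2(\mathbb{R}^N)}^2$ yields the stated estimate with a constant $C_n$ depending on $\|\mu_n\|$. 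The substantive point is the convergence $\int_\Omega d|\mu_n| \to \int_\Omega d|\mu|$, which I would obtain by combining the weak-$*$ convergence $\mu_n \rightharpoonup \mu$ built into the construction with the monotone convergence theorem applied to the nondecreasing sequence $(\mu_n)$.

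The hard part, and the reason the diffuseness hypothesis cannot be dispensed with, is to realize monotonicity, smoothness with compact support, and the correct limit simultaneously. Compared with the local case treated in \cite{MR3659372}, the extra obstacle is the non-locality of $(-\Delta)^s$: the cut-off $\chi_n$ and the mollification modify the Gagliardo energy through long-range interactions across $D_\Omega$, so one must control the tails that the operator sees outside the supports when estimating the $X_0^s$-norm and when passing to the limit. This is precisely where the defining property of a diffuse measure, namely that $\mu$ charges no Borel set of zero $(s,2)$-capacity (equivalently, the decomposition furnished by Proposition \ref{Proposition1-4}), must be used: it is what prevents mass from concentrating on a capacity-null set during the regularization and thereby guarantees that the monotone limit reconstructs $\mu$ with no loss of total mass.
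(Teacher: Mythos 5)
The paper offers no proof of this proposition beyond the single sentence that it follows ``by mimicking the proof of \cite[Proposition 2.1]{MR3659372}'', so your overall strategy --- transport that construction to the $(s,2)$-capacity setting via Proposition \ref{Proposition1-4} --- is exactly the intended one. However, as written your proposal does not actually supply the one step that carries all the difficulty, namely the \emph{monotonicity} of the approximating sequence. You acknowledge this is the hard part, but the mechanism you offer (``a diagonal choice of the regularization, mollification, and cut-off parameters is what should let me force the resulting sequence to be nondecreasing'') is an assertion, not an argument: mollifying a measure at successively smaller scales and multiplying by cut-offs $\chi_n \uparrow 1$ does not produce a nondecreasing sequence in general, and no choice of parameters repairs this by itself. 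This is precisely the point emphasized in Remark \ref{RemEntropy}: general non-negative bounded Radon measures \emph{cannot} be approximated monotonically by bounded functions, so diffuseness must enter the construction in a structural way (in \cite{MR3659372} it does so through an order-preserving device --- restriction of $\mu$ to sublevel sets of an associated potential, or an equivalent truncation of the potential --- not through mollification). Saying that diffuseness ``prevents mass from concentrating on a capacity-null set'' does not identify how it yields monotonicity.

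A second, related gap: you propose to handle the two pieces of the decomposition $\mu = f + T$, $f \in L^1(\Omega)$, $T \in X^{-s}(\Omega)$, separately, approximating $f$ by its truncations $T_m(f)$. But the decomposition furnished by Proposition \ref{Proposition1-4} carries no sign information on $f$ and $T$ individually, even when $\mu \geq 0$; so $T_m(f)$ need not be nondecreasing or non-negative, and the sum of your two sequences need not be either. The parts of your proposal that are correct are indeed the routine ones: the first bullet does follow from $\mu_n \in L^\infty$ with compact support via the embedding \eqref{Eq3.350} and the identity $\|\xi\|_{X_0^s(\Omega)}^2 = 2C_{N,s}^{-1}\|(-\Delta)^{\frac{s}{2}}\xi\|_{L^2(\mathbb{R}^N)}^2$, and once a nondecreasing non-negative sequence with $\mu_n \rightharpoonup \mu$ is in hand, the convergence of total masses follows from monotone convergence as you indicate. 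Your closing paragraph also misplaces the obstacle: the nonlocal tail terms in the Gagliardo energy are irrelevant here (the first bullet is, as you yourself note, routine); the entire content of the proposition is the monotone strong approximation, and that is the part still missing from your argument.
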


\begin{corollary}
By noticing the Remark \ref{RemEntropy}, and taking advantage of Proposition \ref{Proposition1-4}, and Proposition \ref{Proposition2-4}, the existence of the entropy solution is guaranteed for the diffuse measures $\mu$.
\end{corollary}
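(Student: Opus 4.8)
The plan is to recycle, essentially verbatim, the entropy-solution construction carried out in the proof of Theorem \ref{Thm1}; the only change is the choice of the approximating sequence $\{\mu_n\}$ for the datum $\mu$. Since $\mu$ is assumed diffuse with respect to $(s,2)$-capacity, Proposition \ref{Proposition1-4} guarantees $\mu \in L^1(\Omega) + X^{-s}(\Omega)$, which is precisely the class of data for which the entropy solution is defined in Definition \ref{DEFN}; hence the notion of entropy solution is meaningful here, and the measure term in \eqref{Eq3.500} is well posed through the duality between $L^1(\Omega)+X^{-s}(\Omega)$ and $X_0^s(\Omega)\cap L^\infty(\Omega)$.

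First I would invoke Proposition \ref{Proposition2-4} to fix a nondecreasing sequence $(\mu_n)_n$ of smooth, bounded, compactly supported functions (which may be taken non-negative since $\mu \geq 0$) converging strongly to $\mu$ in the space of finite Radon measures. Because each $\mu_n \in L^\infty(\Omega)$ and the total variations are uniformly bounded, the approximating problems \eqref{Eq30}, now read with this $\mu_n$ in place of $T_n(\mu)$ together with $f_n = T_n(f)$, admit solutions $u_n$ via the same Schauder fixed-point argument used at the start of Section \ref{Section3}. The a priori estimates of Proposition \ref{Pro2} and Proposition \ref{Pro3} depend only on the uniform $L^1$-bound of the data, so they persist and furnish a weak limit $u$ together with $T_k(u) \in X_0^s(\Omega)$ for every $k$.

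Next I would verify that the monotone machinery survives. The decisive property, emphasized in Remark \ref{RemEntropy}, is that $f_n$ and $\mu_n$ be increasing: the truncations $f_n = T_n(f)$ are increasing, and Proposition \ref{Proposition2-4} supplies a nondecreasing $(\mu_n)$. Consequently, for each fixed $k$ and $\phi \in X_0^s(\Omega) \cap L^\infty(\Omega)$, the sequence $\{T_k(u_n-\phi)\}$ is increasing, and the strict monotonicity of $(-\Delta)^s - K(x)(\cdot)^{-q}$ upgrades its weak $X_0^s$-convergence to strong convergence, exactly as in the proof of Theorem \ref{Thm1}. Using $T_k(u_n-\phi)$ as a test function and letting $n \to \infty$ then reproduces the entropy formulation \eqref{Eq3.500}.

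The hard part will be the passage to the limit in the measure term $\int_\Omega T_k(u_n - \phi)\,\mu_n\,dx \to \int_\Omega T_k(u-\phi)\,d\mu$. Here I would exploit that the convergence $\mu_n \to \mu$ supplied by Proposition \ref{Proposition2-4} is strong in total variation (not merely narrow), that $T_k(u_n-\phi)$ is uniformly bounded by $k$ and converges both a.e. and strongly in $X_0^s(\Omega)$ to $T_k(u-\phi)$, and that the limit $T_k(u-\phi) \in X_0^s(\Omega)\cap L^\infty(\Omega)$ is an admissible pairing against $\mu \in L^1(\Omega)+X^{-s}(\Omega)$. Once this term is controlled, the remaining $K$- and $f$-contributions are handled by the same Fatou and dominated-convergence estimates as in Theorem \ref{Thm1}, so $u$ is the sought entropy solution.
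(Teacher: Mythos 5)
Your proposal is correct and follows exactly the route the paper intends: the corollary is justified precisely by combining Proposition \ref{Proposition1-4} (so that a diffuse $\mu$ lies in $L^1(\Omega)+X^{-s}(\Omega)$ and the entropy formulation is meaningful), Proposition \ref{Proposition2-4} (supplying the nondecreasing bounded approximations $\mu_n$ that Remark \ref{RemEntropy} identifies as the key requirement), and then re-running the entropy-solution construction from the proof of Theorem \ref{Thm1} verbatim. Your expanded write-up, including the passage to the limit in the measure term via the strong (total-variation) convergence of $\mu_n$, is a faithful and complete elaboration of that same argument.
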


\begin{remark}
It can be shown that every non-negative bounded Radon measure $\mu$ can be uniquely decomposed as $\mu=\lambda+\nu$, with $\lambda$ diffuse with respect to $(s,2)$-capacity, and $\nu$ singular with respect to the $(s,2)$-capacity (see for example \cite{MR1409661}). By Proposition \ref{Proposition1-4}, the measure $\lambda$ can be further decomposed as $\lambda=f+T$, with $f \in L^1(\Omega)$ and $T \in X^{-s}(\Omega)$. This decomposition is not unique since $L^1(\Omega) \cap X^{-s}(\Omega) \neq \{0\}$.
\end{remark}

\begin{remark}
In a similar way to the results in \cite{MR3639996, MR2989693, MR4160003}, and with the similar techniques, the fractional $p$-Laplacian counterpart of our results may be obtained for $p >1$, and $0 \leq \beta < ps$. Notice that what we really need is to have a monotone differential operator such that the maximum principle holds. See \cite[Theorem 1]{MR4191669} for a weak comparison principle for the weighted fractional $p$-Laplacian equations.
\end{remark}

\section*{References}

\bibliographystyle{spmpsci}
\bibliography{mybibfile}

\end{document}